\documentclass[12pt,reqno]{amsart}
\usepackage{amsthm}
\usepackage{amssymb}
\usepackage{graphics}
\usepackage{tikz}
\usetikzlibrary{shapes,backgrounds,calc}
\usepackage{latexsym}
\usepackage{multicol}
\usepackage{verbatim,enumerate}
\usepackage{accents}
\usepackage{cite}
\usepackage{multirow}
\usepackage{bigstrut}
\usepackage{array}
\usepackage[colorlinks=true, linkcolor=blue, citecolor=blue, urlcolor=black]{hyperref}
\usepackage{hyperref}
\usepackage{amsmath, amscd,url}
\usepackage{multirow}
\usepackage{longtable}
\usepackage{stackengine}

\advance\textwidth by 1.3in \advance\oddsidemargin by -.6in \advance\evensidemargin by -.6in
\theoremstyle{definition}

\newtheorem{proposition}{Proposition}
\newtheorem{theorem}{Theorem}[section]
\newtheorem{lemma}[theorem]{Lemma}
\newtheorem{corollary}[theorem]{Corollary}
\theoremstyle{definition}

\newtheorem{example}[theorem]{Example}

\theoremstyle{remark}
\newtheorem{remark}[theorem]{Remark}

\theoremstyle{definition}

\newcounter{cnt}
 \makeatletter
\def\mydggeometry{\makeatletter\dg@YGRID=1\dg@XGRID=20\unitlength=0.003pt\makeatother}
\makeatother \theoremstyle{remark}

\numberwithin{equation}{section}
\let\bwdg\bigwedge
\def\bigwedge{{\textstyle\bwdg}}

\newcommand{\Q}{\mathbb{Q}}
\newcommand{\Z}{\mathbb{Z}}

\newcommand{\nc}{\newcommand}
\newcommand{\rnc}{\renewcommand}

\nc{\cal}{\mathcal} \nc{\goth}{\mathfrak} \rnc{\bold}{\mathbf}

\nc\bomega{{\mbox{\boldmath $\omega$}}} \nc\bpsi{{\mbox{\boldmath $\Psi$}}}
\nc\balpha{{\mbox{\boldmath $\alpha$}}}
\nc\bpi{{\mbox{\boldmath $\pi$}}}
\nc\bvpi{{\mbox{\boldmath $\varpi$}}}
\nc\chara{\operatorname{ch}}

\nc\bxi{{\mbox{\boldmath $\xi$}}}
\nc\bmu{{\mbox{\boldmath $\mu$}}} \nc\bcN{{\mbox{\boldmath $\cal{N}$}}} \nc\bcm{{\mbox{\boldmath $\cal{M}$}}} \nc\blambda{{\mbox{\boldmath
			$\lambda$}}}\nc\bnu{{\mbox{\boldmath $\nu$}}}

\makeatletter
\def\section{\def\@secnumfont{\mdseries}\@startsection{section}{1}%
	\z@{.7\linespacing\@plus\linespacing}{.5\linespacing}%
	{\normalfont\scshape\centering}}
\def\subsection{\def\@secnumfont{\bfseries}\@startsection{subsection}{2}%
	{\parindent}{.5\linespacing\@plus.7\linespacing}{-.5em}%
	{\normalfont\bfseries}}
\makeatother

\nc{\Hom}{\operatorname{Hom}}
\nc{\mode}{\operatorname{mod}}
\nc{\End}{\operatorname{End}} \nc{\wh}[1]{\widehat{#1}} \nc{\Ext}{\operatorname{Ext}} \nc{\ch}{\text{ch}} \nc{\ev}{\operatorname{ev}}
\nc{\Ob}{\operatorname{Ob}} \nc{\soc}{\operatorname{soc}} \nc{\rad}{\operatorname{rad}} \nc{\head}{\operatorname{head}}

\nc{\Cal}{\cal} \nc{\Xp}[1]{X^+(#1)} \nc{\Xm}[1]{X^-(#1)}

\nc{\N}{{\bold N}}  \nc\boa{\bold a} \nc\bob{\bold b} \nc\boc{\bold c} \nc\bod{\bold d} \nc\boe{\bold e} \nc\bof{\bold f} \nc\bog{\bold g}
\nc\boh{\bold h} \nc\boi{\bold i} \nc\boj{\bold j} \nc\bok{\bold k} \nc\bol{\bold l} \nc\bom{\bold m} \nc\bon{\mathbb n} \nc\boo{\bold o}
\nc\bop{\bold p} \nc\boq{\bold q} \nc\bor{\bold r} \nc\bos{\bold s} \nc\boT{\bold t} \nc\boF{\bold F} \nc\bou{\bold u} \nc\bov{\bold v}
\nc\bow{\bold w} \nc\boz{\bold z}\nc\ba{\bold A} \nc\bb{\bold B} \nc\bc{\mathbb C} \nc\bd{\bold D} \nc\be{\bold E} \nc\bg{\bold
	G} \nc\bh{\bold H} \nc\bi{\bold I} \nc\bj{\bold J} \nc\bk{\bold K} \nc\bl{\bold L} \nc\bm{\bold M} \nc\bn{\mathbb N} \nc\bo{\bold O} \nc\bp{\bold
	P} \nc\bq{\bold Q} \nc\br{\bold R} \nc\bs{\bold S} \nc\bt{\bold T} \nc\bu{\bold U} \nc\bv{\bold V} \nc\bw{\bold W} \nc\bz{\mathbb Z} \nc\bx{\bold
	x} \nc\KR{\bold{KR}} \nc\rk{\bold{rk}} \nc\het{\text{ht }}

\nc\toa{\tilde a} \nc\tob{\tilde b} \nc\toc{\tilde c} \nc\tod{\tilde d} \nc\toe{\tilde e} \nc\tof{\tilde f} \nc\tog{\tilde g} \nc\toh{\tilde h}
\nc\toi{\tilde i} \nc\toj{\tilde j} \nc\tok{\tilde k} \nc\tol{\tilde l} \nc\tom{\tilde m} \nc\ton{\tilde n} \nc\too{\tilde o} \nc\toq{\tilde q}
\nc\tor{\tilde r} \nc\tos{\tilde s} \nc\toT{\tilde t} \nc\tou{\tilde u} \nc\tov{\tilde v} \nc\tow{\tilde w} \nc\toz{\tilde z} \nc\woi{w_{\omega_i}}

\begin{document}
	\setcounter{section}{0}
	\setcounter{tocdepth}{1}
	\title{The Index of Composition of polynomials and its applications}
	\author[Surender Kumar]{Surender Kumar}
	\author[Sumandeep Kaur]{Sumandeep Kaur}
	\address[Surender Kumar]{Mathematics Division, School of advanced science and languages, VIT Bhopal University, Kothrikalan, sehore, India}
	\email[Surender Kumar]{surenderkumar@vitbhopal.ac.in}
	\address[Sumandeep Kaur]{Department of Mathematics, Shanghai University, China}
	\email[Sumandeep Kaur]{suman@shu.edu.cn}
	\subjclass [2020]{11R04, 11R09, 11R29.}
	\keywords{Monogenity, Polynomials, Discriminant}
	\begin{abstract} 
		The index of a given monic irreducible polynomial $f(x)\in\Z[x]$ having a root $\theta $ is the index of $\Z[\theta]$ in the ring of algebraic integers of $\Q(\theta).$ Further, $f(x)$ is monogenic if its index is $1$. In this article, we prove that the index of $f(x)$ always divides the index of $f(g(x)),$ where $f(x), g(x)\in \Z[x]$. Additionally, we give the precise power of the index of $f(x)$ dividing the index of $f(g(x))$. Also, we provide a necessary condition for the monogenity of $f(g(x))$. As an application of our results, we prove that the $n$-fold composition $f^n(x)=(f\circ f\cdots\circ f)(x)$ of a polynomial $f(x)$ is monogenic for all $n\in\mathbb{N}$ if and only if the sequence $\langle I_n\rangle$ is convergent, where $I_n$ is the index of $f^n(x)$. Moreover, we show that for a given $n\in\mathbb{N}$, if $I_n$ is $m^{\text{th}}$ power free integer, then the $n$-tower $\Q\subseteq K_1 \subseteq\cdots \subseteq K_{n-1} \subseteq K_n,$ defined by $n$ iterates of a polynomial $f(x)\in\Z[x]$ contains at least $\max\{\lfloor n-\log_{\deg f}m \rfloor,0\}$ monogenic number fields.
	\end{abstract}
	
	\maketitle
	\section{Introduction and statement of results}
	An algebraic number field $K$ having ring of integers $\Z_K$ is said to be monogenic if there exists an algebraic integer $\beta\in K$ such that $\Z_K=\Z[\beta]$.  Then also $\Z_K=\Z[\gamma]$ for any $\gamma $ of the form $\pm \beta +a$ with $a\in\Z$. Such elements $\gamma$ are said to be equivalent to $\beta$. These elements are finite up to equivalence \cite{GY}. Monogenic number fields are widely studied in algebraic number theory because of their simple arithmetic structure and computational benefits. Several important families of monogenic number fields arising from binomials, trinomials and quadrinomials have been extensively studied (see\cite{GAAL, AKN, ASS, JS, LJ}).  
	
	An irreducible polynomial $f(x)\in\Z[x]$ having a root $\theta$ is said to be monogenic if the ring of algebraic integers of $\Q(\theta)$ is $\Z[\theta]$. Monogenic polynomial always give a monogenic number field. The Dedekind criterion \cite{DC}, which is based on the coprimality of two polynomials modulo a prime, has been used by numerous mathematicians to determine the monogenity of polynomials \cite{ JS, JON, LJ, SS}. The coprimality condition is quite complex for a general polynomial of higher degree. Despite the difficulty in determining higher degree monogenic polynomials, we are inspired by \cite{GAA, HL, LJones, SSR, SK} concerning the monogenity of the composition of polynomials and monogenity of iterates of a polynomial.
	
	Let $f(x),g(x)\in\mathbb{Z}[x]$ be monic polynomials such that $(f\circ g)(x)$ is irreducible over $\mathbb{Q}$. If $\alpha$ is a root of $(f\circ g)(x)$ and $\beta=g(\alpha)$, then $\beta$ is a root of $f(x)$ and we obtain a tower of number fields
	$\Q\subseteq \Q(\beta)\subseteq \Q(\alpha).$
	In \cite{SK} we showed that any prime divisor of the index $[\Z_K:\Z[g(\alpha)]]$ also divides $[\Z_L:\Z[\alpha]].$ Consequently, this paper addresses the following question: $$ \text{ `` Does the index } [\Z_K:\Z[g(\alpha)]] \text{ divide the index } [\Z_L:\Z[\alpha]]".$$
	\noindent In this article, we study the relation between these indices. In fact, we prove that index of $f(x)$ always divides the index of $f(g(x))$. Additionally, we provide the exact power of the index of $f(x)$ dividing index of $f(g(x))$. Also, we give a sufficient condition for the non-monogenity of $f(g(x))$. As an application of our main results, we give necessary and sufficient condition for all iterates of a polynomial to be monogenic in terms of the convergence of the sequence $\langle I_n\rangle$, where $I_n$ is the index of $f^n(x)$. Moreover, we give a lower bound for the number of monogenic fields in a $n$-tower of fields defined by $n$ iterates of a polynomial.
	
	Throughout the paper, $D(f)$ and $(f\circ g)(x)$ or $f\circ g$ will denote the discriminant of $f(x)\in\Z[x]$ and the composition of $f(x)$ and $g(x)$ respectively. Also the discriminant and the ring of integers of an algebraic number field $K$ will be denoted by $d_K$ and $\Z_K$, respectively. For a relative extension $K/F$, the notations $d_{K/F}$ and $D_{K/F}$ will be used to denote discriminant and different of $K/F$ respectively and $\mathcal{N}_{K/F}(\gamma)$ will denote  the norm of $\gamma\in K$ over $F$. Note that the index to be denoted by $I_f$ of a monic irreducible polynomial $f(x)\in\Z[x]$  having root $\theta$ is the group index $[\Z_{\Q(\theta)}:\Z[\theta]]$. In this paper for the composition, degree of $f(x)$ and $g(x)$ is greater than one and $f(x)\in \Z[x]$ is irreducible means it is irreducible over $\Q$ unless stated otherwise. For  a prime $p$ and an integer $a$, we denote the exact power of $p$ dividing $a$ by $v_{p}(a)$. 
	
	With these notations, we state our main results.
	\begin{theorem}\label{Th1}
		Let $ f(x), g(x)\in\Z[x]$ be two monic polynomials such that $(f\circ g)(x)$ is irreducible. Then $$I^{\deg g}_f ~ \text{ divides }~ I_{f\circ g}.$$
	\end{theorem}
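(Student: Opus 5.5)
Let $\alpha$ be a root of $f\circ g$, $\beta=g(\alpha)$, $K=\Q(\beta)$, $L=\Q(\alpha)$, $n=\deg f$, $m=\deg g$, so $[L:K]=m$. The plan is to compare discriminants along the tower $\Q\subseteq K\subseteq L$ and reduce the divisibility $I_f^m\mid I_{f\circ g}$ to an integrality statement about a relative index.

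First, I would use the classical discriminant formula for a composition:
$$D(f\circ g)=\pm\, D(f)^{m}\,\mathcal{N}_{K/\Q}\big(D_{g(x)-\beta}\big),$$
where $D_{g(x)-\beta}\in\Z_K$ is the discriminant of the polynomial $g(x)-\beta\in\Z_K[x]$. Up to sign, and with leading coefficients equal to $1$, this is
$$\prod_{f(\beta_i)=0}\ \prod_{g'(\gamma)=0}\big(\beta_i-g(\gamma)\big).$$
Note that $g(x)-\beta$ is the minimal polynomial of $\alpha$ over $K$. Next, I would apply the transitivity of discriminants in towers:
$$d_L=d_K^{\,m}\,\mathcal{N}_{K/\Q}(d_{L/K}).$$
Together with $D(f)=I_f^2 d_K$ and $D(f\circ g)=I_{f\circ g}^2 d_L$, these give
$$I_{f\circ g}^2\, d_K^{m}\,\mathcal{N}_{K/\Q}(d_{L/K})=I_f^{2m}d_K^{m}\,\mathcal{N}_{K/\Q}\big(D_{g(x)-\beta}\big),$$
hence
$$\Big(\frac{I_{f\circ g}}{I_f^{m}}\Big)^2=\frac{\mathcal{N}_{K/\Q}\big(D_{g(x)-\beta}\big)}{\mathcal{N}_{K/\Q}(d_{L/K})}.$$

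Since $\alpha$ is integral over $\Z_K$ with minimal polynomial $g(x)-\beta$, the $\Z_K$-module $\Z_K[\alpha]$ sits inside $\Z_L$. The standard relative result then gives
$$(D_{g(x)-\beta})=d_{L/K}\cdot[\Z_L:\Z_K[\alpha]]_{\Z_K}^2,$$
with the module index being an integral ideal $\mathfrak{a}$ of $\Z_K$. Taking norms, the right-hand side above equals $\mathcal{N}(\mathfrak{a})^2$, a positive integer square. Therefore $I_{f\circ g}/I_f^m=\pm\mathcal{N}(\mathfrak a)\in\Z$, which is the claim. As a by-product this yields $I_{f\circ g}=I_f^{m}\,\mathcal N_{K/\Q}\big([\Z_L:\Z_K[\alpha]]\big)$, which should also feed the later results giving the exact power.

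The main obstacle is justifying the relative step cleanly, because $\Z_K$ need not be a PID and so $\Z_L$ need not be free over $\Z_K$. I would handle this by localizing at each prime $\mathfrak p$ of $\Z_K$. Over the DVR $\Z_{K,\mathfrak p}$, the module $\Z_{L}\otimes\Z_{K,\mathfrak p}$ is free, and the usual argument (change of basis matrix, determinant squared) gives the local identity. Alternatively, one can bypass relative language altogether by computing $[\Z_L:\Z[\alpha]]$ as the product $[\Z_L:\Z_K[\alpha]]\cdot[\Z_K[\alpha]:\Z[\alpha]]$. The second factor is $I_f^m$, because $\Z_K[\alpha]=\bigoplus_{j<m}\Z_K\alpha^j$ and $\Z[\alpha]=\bigoplus_{j<m}\Z[\beta]\alpha^j$, using $\Z[\alpha]=\Z[\beta][\alpha]$ since $\beta=g(\alpha)$. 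This second route is shorter and I would likely present it as the main argument, keeping the discriminant computation as a check.
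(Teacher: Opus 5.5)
Your proposal is correct, and the argument you plan to present as the main one differs from the paper's.

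\textbf{The paper's route.} The paper works only with discriminants. It combines $D(f\circ g)=\pm D(f)^{m}\mathcal{N}_{L/\Q}(g'(\alpha))$ with $d_L=\pm d_K^{m}N(d_{L/K})$ to get
\[
I_{f\circ g}^2/I_f^{2m}=|\mathcal{N}_{L/\Q}(g'(\alpha))|/N(D_{L/K}).
\]
It then shows this ratio is an integer: $g'(\alpha)$ lies in the different $D_{L/K}$, and taking norms gives the divisibility (Lemma \ref{L2.5}). The final step $a^2\mid b^2\Rightarrow a\mid b$ is left implicit.

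\textbf{Your first route.} It has the same skeleton, but replaces the different argument with the relative formula $\operatorname{disc}(g(x)-\beta)\Z_K=d_{L/K}\,\mathfrak a^2$. This gives slightly more, namely that the quotient is a perfect square. The cost is the localization step you describe.

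\textbf{Your second route.} It bypasses all of this. You use:
\begin{enumerate}
\item the chain of lattices $\Z[\alpha]\subseteq\Z_K[\alpha]\subseteq\Z_L$;
\item the decompositions $\Z[\alpha]=\bigoplus_{j<m}\Z[\beta]\alpha^j$ and $\Z_K[\alpha]=\bigoplus_{j<m}\Z_K\alpha^j$, which are direct because $[L:K]=m$ (this follows at once from $[L:\Q]=nm$, $[K:\Q]\le n$, $[L:K]\le m$);
\item multiplicativity of indices.
\end{enumerate}
Together these give the exact identity $I_{f\circ g}=I_f^{m}\,[\Z_L:\Z_K[\alpha]]$. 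No differents, ideal norms or square roots are needed.

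\textbf{Comparison.} Your second route is shorter and more elementary than the paper's proof. It also identifies the cofactor conceptually: it equals $1$ exactly when $1,\alpha,\dots,\alpha^{m-1}$ is a $\Z_K$-basis of $\Z_L$. The paper's discriminant route has the advantage of tying the cofactor to computable quantities. Combining the two gives $[\Z_L:\Z_K[\alpha]]^2=|\mathcal{N}_{L/\Q}(g'(\alpha))|/N(d_{L/K})$.

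\textbf{A harmless slip.} Your explicit product for the discriminant of $g(x)-\beta$ omits the factor $m^{m}$ coming from the leading coefficient of $g'$, so its norm should pick up $m^{mn}$ (cf.\ Remark \ref{remark}). Nothing in your argument uses that expansion.
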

	The following corollary is a direct consequence of the above theorem and Theorem \ref{PTh1}.
	\begin{corollary}\label{C1}
		Let $ f(x), g(x)\in\Z[x]$ be two monic polynomials such that $(f\circ g)(x)$ is irreducible. If $L=\Q(\theta)$ and $K=\Q(g(\theta))$, where $(f\circ g)(\theta)=0,$ then $[\Z_K:\Z[g(\theta)]]^{[L:K]}$ divides  $[\Z_L:\Z[\theta]].$
	\end{corollary}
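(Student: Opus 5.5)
Let $\theta$ be a root of $f\circ g$, $\beta=g(\theta)$, $L=\Q(\theta)$, $K=\Q(\beta)$, $m=\deg g=[L:K]$ and $n=\deg f$. The plan is to compare discriminants through the tower $\Q\subseteq K\subseteq L$ and then isolate the contributions of $I_f$ and of $d_K$.

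First, the standard formula for the discriminant of a composition gives
\[
D(f\circ g)=\pm\, D(f)^{m}\,\mathcal{N}_{K/\Q}\bigl(D_{\beta}(g(x)-\beta)\bigr),
\]
where $D_\beta(g(x)-\beta)$ is the discriminant of $g(x)-\beta\in\Z_K[x]$, the minimal polynomial of $\theta$ over $K$. I would derive this from the resultant expression $D(f\circ g)=\pm\mathrm{Res}(f\circ g,(f'\circ g)g')$ by factoring $f\circ g=\prod_j(g(x)-\beta_j)$ over the conjugates $\beta_j$ of $\beta$. On the field side, the tower formula gives
\[
d_L=d_K^{m}\,\mathcal{N}_{K/\Q}(d_{L/K}).
\]
Combining these with $D(f\circ g)=I_{f\circ g}^2 d_L$ and $D(f)=I_f^2 d_K$ yields
\[
I_{f\circ g}^2=I_f^{2m}\cdot\frac{\mathcal{N}_{K/\Q}\bigl(D_\beta(g(x)-\beta)\bigr)}{\mathcal{N}_{K/\Q}(d_{L/K})}.
\]

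Next, I would show that the ratio of norms is the square of an integer, namely the $\Z$-module index $[\Z_L:\Z_K[\theta]]$. Since $\Z_K[\theta]$ is a free $\Z_K$-module with basis $1,\theta,\dots,\theta^{m-1}$, the relative ideal index satisfies
\[
D_\beta(g(x)-\beta)\,\Z_K=d_{L/K}\cdot[\Z_L:\Z_K[\theta]]_{\Z_K}^2 .
\]
Here $[\Z_L:\Z_K[\theta]]_{\Z_K}$ is an ideal of $\Z_K$ whose norm is $[\Z_L:\Z_K[\theta]]$. Taking norms gives
\[
I_{f\circ g}=I_f^{m}\,[\Z_L:\Z_K[\theta]],
\]
with signs fixed by positivity. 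Equivalently, and more directly, we have the chain $\Z[\theta]\subseteq\Z_K[\theta]\subseteq\Z_L$ together with $[\Z_K[\theta]:\Z[\theta]]=[\Z_K:\Z[\beta]]^m=I_f^m$. This last equality holds because $\Z[\theta]=\bigoplus_{i<m}\Z[\beta]\theta^i$ and $\Z_K[\theta]=\bigoplus_{i<m}\Z_K\theta^i$, since $g(x)-\beta$ is monic of degree $m$ over $\Z[\beta]$. This chain is the cleanest route, so I would present it as the main argument and keep the discriminant computation as a consistency check. Multiplicativity of indices along the chain then gives $I_{f\circ g}=I_f^{m}[\Z_L:\Z_K[\theta]]$, so $I_f^{\deg g}\mid I_{f\circ g}$.

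The main obstacle is justifying the direct sum decomposition $\Z[\theta]=\bigoplus_{i=0}^{m-1}\Z[\beta]\theta^i$. Every power $\theta^k$ reduces to this form by the monic relation $g(\theta)=\beta$, using division by $g$ in $\Z[x]$. Conversely, $\beta=g(\theta)\in\Z[\theta]$, so the sum is contained in $\Z[\theta]$. Directness follows because $1,\dots,\theta^{m-1}$ is a $K$-basis of $L$. Given this, the index of the direct sum of $m$ copies of $\Z[\beta]\subseteq\Z_K$ is $I_f^m$.
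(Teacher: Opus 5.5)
Your proof is correct, but it takes a different route from the paper.

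\textbf{The paper's route.} The paper obtains the corollary from Theorem \ref{Th1} together with Capelli's theorem, which supplies $[L:K]=\deg g$. It proves Theorem \ref{Th1} entirely through discriminants. Substituting $D(f)=I_f^2d_K$ and $D(f\circ g)=I_{f\circ g}^2d_L$ into the composition formula $D(f\circ g)=\pm D(f)^{\deg g}\mathcal{N}_{L/\Q}(g'(\theta))$ and the tower formula for $d_L$ gives $I_{f\circ g}^2/I_f^{2[L:K]}=|\mathcal{N}_{L/\Q}(g'(\theta))|/N(D_{L/K})$. Lemma \ref{L2.5}, which rests on $g'(\theta)\in D_{L/K}$, shows the right side is an integer. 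Hence $I_f^{2m}\mid I_{f\circ g}^2$, and so $I_f^m\mid I_{f\circ g}$.

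\textbf{Your route.} Your main argument avoids discriminants, differents and ideal norms altogether. You use the chain $\Z[\theta]\subseteq\Z_K[\theta]\subseteq\Z_L$ with two decompositions. The first is $\Z[\theta]=\bigoplus_{i<m}\Z[\beta]\theta^i$, which is exactly where you need $g\in\Z[x]$ monic. The second is $\Z_K[\theta]=\bigoplus_{i<m}\Z_K\theta^i$. Directness in both rests on $[L:K]=m$, and this is where irreducibility of $f\circ g$ enters. You state that equality without proof; it follows from Capelli, or simply from the degree count $[K:\Q]\le\deg f$, $[L:K]\le\deg g$. The chain then gives the exact identity $[\Z_L:\Z[\theta]]=[\Z_K:\Z[\beta]]^{m}\,[\Z_L:\Z_K[\theta]]$.

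\textbf{What each buys.} Your identity gives more than the divisibility, because it names the cofactor. The paper only exhibits that cofactor as $\bigl(|\mathcal{N}_{L/\Q}(g'(\theta))|/N(D_{L/K})\bigr)^{1/2}$; your relative-discriminant check shows the two descriptions agree. It also shows at once that $f\circ g$ is monogenic if and only if $f$ is monogenic and $\Z_L=\Z_K[\theta]$. The paper's route, in exchange, ties the cofactor to the explicit integer $\mathcal{N}_{L/\Q}(g'(\theta))$, which constrains which primes can divide it. That is the kind of information the paper exploits in Corollary \ref{C11}.
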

	\begin{remark}
		In general for any tower $\Q\subseteq K\subseteq L$ of number fields, this result does not hold. For example, if $L=\Q(\sqrt[8]{3})$ and $K=\Q(\alpha)$, where $\alpha=1+2\sqrt[4]{3}+\sqrt{3}$ then $[\Z_L:\Z[\sqrt[8]{3}]]=1$ and $[\Z_K:\Z[\alpha]]=112.$ Clearly $[\Z_K:\Z[\alpha]]\nmid [\Z_L:\Z[\sqrt[8]{3}]].$ 
	\end{remark}
	The following corollary, which follows directly from Theorem~\ref{Th1}, provides a sufficient condition for a prime $p$ not to divide the index of a polynomial.
	\begin{corollary}
		Let $ f(x),g(x)\in\Z[x]$ be two monic polynomials such that $(f\circ g)(x)$ is irreducible. If $p$ is a prime such that $v_{p}(I_{f\circ g})< \deg g$, then $p$ does not divide $I_f.$
	\end{corollary}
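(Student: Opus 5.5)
The corollary follows from Theorem~\ref{Th1} by contraposition, so I would argue it in one short step.

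Suppose, for contradiction, that $p$ divides $I_f$, so that $v_p(I_f)\ge 1$. Note first that $f(x)$ is irreducible. Indeed, if $f=f_1f_2$ with $f_1,f_2$ nonconstant, then $f\circ g=(f_1\circ g)(f_2\circ g)$ would be a nontrivial factorization of $f\circ g$. So $I_f$ is defined, and Theorem~\ref{Th1} applies to give
\[
I_f^{\deg g}\mid I_{f\circ g}.
\]
Taking $p$-adic valuations yields
\[
v_p(I_{f\circ g})\ \ge\ \deg g\cdot v_p(I_f)\ \ge\ \deg g.
\]
This contradicts the hypothesis $v_p(I_{f\circ g})<\deg g$. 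Hence $p\nmid I_f$.

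No step here is a real obstacle: all the content lies in Theorem~\ref{Th1}. The one point to check is that irreducibility of $f$ follows from irreducibility of $f\circ g$, which the factorization argument above settles.
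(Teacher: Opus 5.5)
Your proposal is correct and is essentially the paper's argument: the paper states that this corollary follows directly from Theorem~\ref{Th1}, and you take $p$-adic valuations in $I_f^{\deg g}\mid I_{f\circ g}$ exactly as intended. Your factorization check that $f$ is irreducible is a valid substitute for citing Capelli's Theorem~\ref{PTh1}, which the paper uses for the same purpose.
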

	\begin{example}
		Let $f(x)=x^6+x-12$ and $g(x)=x^6-2.$ Then the polynomial $(f\circ g)(x)=x^{36} - 12x^{30} + 60x^{24} - 160x^{18} + 240x^{12} - 191x^6 + 50$ is irreducible. Here, $I_{f\circ g}=2^3\times 5^3.$ Take $L=\Q(\theta)$, where $(f\circ g)(\theta)=0.$ As $v_p(I_{f\circ g})<\deg g$ for any prime $p$, therefore in view of the above corollary $I_f=1.$
	\end{example}
	The next corollary is a well-known result for the families of prime-power compositional polynomials $f(x)$ satisfying the condition that $f(0)$ is square-free (see\cite{SSR}[Theorem 1.11]). However, it is stated here as a consequence of Theorem \ref{Th1}. 
	\begin{corollary}\label{C11}
		Let $p$ be a prime and $t$ be a positive integer. Let $f(x)\in\Z[x]$ be a monic polynomial such that $f(x^{p^t})$ is irreducible. If $f(x)$ is monogenic and $|f(0)|=1,$ then the index of $f(x^{p^t})$ is equal to $p^s,$ for some non-negative integer $s.$
	\end{corollary}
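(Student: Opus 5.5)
Take $g(x)=x^{p^t}$, so that $f\circ g=f(x^{p^t})$, and let $\theta$ be a root of $f(x^{p^t})$. The plan is to compare discriminants and show that no prime other than $p$ can divide $I_{f\circ g}$.

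First I compute the discriminant of the composition. The standard formula for the discriminant of a composition of monic polynomials is
\[
D(f\circ g)=\pm\, D(f)^{\deg g}\,\prod_{f(\beta)=0} D\bigl(g(x)-\beta\bigr).
\]
For $g(x)=x^{p^t}$ and $n=p^t$ we have $D(x^{n}-\beta)=\pm n^{n}\beta^{n-1}$. The product over the roots $\beta$ of $f$ then gives
\[
\prod_\beta \beta^{n-1}=\pm f(0)^{n-1}=\pm1,
\]
using the hypothesis $|f(0)|=1$. Hence
\[
D(f\circ g)=\pm\, D(f)^{p^t}\,(p^t)^{p^t\deg f}.
\]

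Next I bring in the index relation. Let $L=\Q(\theta)$ and $K=\Q(\theta^{p^t})$. Then $D(f\circ g)=I_{f\circ g}^2\, d_L$ and $D(f)=I_f^2\, d_K$. Since $f$ is monogenic, $I_f=1$ and so $D(f)=d_K$. The tower formula gives
\[
d_L=d_K^{[L:K]}\,\mathcal N_{K/\Q}(d_{L/K}),
\]
and $[L:K]=p^t$ because $f(x^{p^t})$ is irreducible. Putting these together,
\[
I_{f\circ g}^2\,\mathcal N_{K/\Q}(d_{L/K}) = \pm (p^t)^{p^t\deg f}.
\]
Both factors on the left are integers, so every prime divisor of $I_{f\circ g}$ is $p$. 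Therefore $I_{f\circ g}=p^s$ for some $s\ge 0$.

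As a variant, one could instead invoke Theorem~\ref{Th1} to note $I_f^{p^t}\mid I_{f\circ g}$. That divisibility is trivial here, though; the real content is the discriminant computation above.

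The main obstacle is the composition discriminant formula, in particular getting the factor $D(f)^{\deg g}$ with the correct exponent. If I want to avoid the formula, I would argue locally instead. Fix a prime $q\ne p$. Then $x^{n}-\beta$ is separable modulo every prime of $K$ above $q$, since its discriminant $n^n\beta^{n-1}$ is a unit there ($\beta$ is a unit because $|f(0)|=1$). So $\Z_K[\theta]$ is $q$-maximal in $L$. Together with $\Z[\theta^{p^t}]$ being maximal, this gives that $\Z[\theta]$, which equals $\Z[\theta^{p^t}][\theta]$, is $q$-maximal, so $q\nmid I_{f\circ g}$.
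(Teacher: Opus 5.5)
Your proposal is correct and follows essentially the paper's own proof. The paper likewise specializes the composition discriminant formula to $g(x)=x^{p^t}$ (Remark \ref{remark}), uses $|f(0)|=1$ and $D(f)=d_K$, and concludes from $d_K^{p^t}\mid d_L$ (your tower formula) that $I_{f\circ g}^2$ divides a power of $p$; your local argument for primes $q\neq p$ is also valid but not needed.
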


	Let $f(x)\in\Z[x]$ be a monic polynomial such that for each $n\in\mathbb{N}$, its $n$-fold composition $f^n(x)=(f\circ f\circ \cdots\circ f)(x)$ is irreducible. Let  $I_{n}$ denote the index of $f^{n}(x).$ Then the sequence $\langle I_n \rangle$ takes values in $\mathbb{N}.$ 
	We now establish several basic properties of this sequence in the following lemma.
	\begin{theorem}\label{Th3}
		Let $ f(x)\in\Z[x]$ be a monic polynomial of degree $d$ such that $f^n(x)$ is irreducible for every $n\in\mathbb{N}.$ Then the following statements are true:
		\begin{enumerate}
			\item $I_n$ divides  $I_{n+1}$ for all $n\in\mathbb{N}.$
			\item If $m\le n$, then  $I_m^{d^{n-m}}$ divides  $I_n$.
			\item $\langle I_{n}\rangle$ is an increasing sequence.
			\item The iterate $f^n(x)$ is monogenic for all $n\in\mathbb{N}$ if and only if  the sequence $\langle I_{n} \rangle$ is bounded.
			\item If $I_n$ is $d^{\text{th}}$ power free integer, then $I_m=1$ for each $m<n$.
		\end{enumerate}
	\end{theorem}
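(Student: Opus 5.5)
Every part of the statement will be deduced from Theorem~\ref{Th1}, applied to suitable compositions of iterates of $f$. The one identity I need is $f^{n}(x)=(f^m\circ f^{n-m})(x)$ for $m\le n$, together with $\deg f^{n-m}=d^{\,n-m}$. Since all iterates are irreducible by hypothesis, Theorem~\ref{Th1} applies with outer polynomial $f^m$ and inner polynomial $g=f^{n-m}$. This gives $I_m^{d^{n-m}}\mid I_n$, which is (2).

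For (1), take $m=n$, $n+1$ in (2): $I_n^{d}\mid I_{n+1}$, hence $I_n\mid I_{n+1}$. (Alternatively, write $f^{n+1}=f^n\circ f$ and apply Theorem~\ref{Th1} directly.) For (3), divisibility between positive integers gives $I_n\le I_{n+1}$, so the sequence is (weakly) increasing. Here ``increasing'' should be read as non-decreasing. Part (1) in fact gives more: $I_{n+1}\ge I_n^d$.

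For (4), if every $f^n$ is monogenic then $I_n=1$ for all $n$, so the sequence is bounded. Conversely, suppose some $I_m>1$. By (2), $I_n\ge I_m^{d^{n-m}}\ge 2^{d^{n-m}}$ for all $n\ge m$. Since $d\ge 2$, this tends to infinity, so the sequence is unbounded. By (3), boundedness is equivalent to convergence, which matches the abstract.

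For (5), suppose $m<n$ and $I_m>1$, and let $p$ be a prime dividing $I_m$. By (2), $p^{d^{n-m}}$ divides $I_n$. Since $d^{n-m}\ge d$, also $p^d\mid I_n$, contradicting that $I_n$ is $d$-th power free. Hence $I_m=1$.

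There is no real obstacle beyond checking that the hypotheses of Theorem~\ref{Th1} hold in each application: both polynomials must be monic, and the composite $f^m\circ f^{n-m}=f^n$ must be irreducible. Both are given. One also needs $d\ge 2$, which the paper's standing convention on degrees in compositions supplies.
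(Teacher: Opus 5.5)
Your proof is correct and follows essentially the same route as the paper: everything is deduced from Theorem~\ref{Th1}, and parts (3)--(5) are argued the same way there. The only minor difference is the order of (1) and (2). You get (2) in one step by applying Theorem~\ref{Th1} to $f^n=f^m\circ f^{n-m}$ and then read off (1). The paper instead first proves $I_n^d\mid I_{n+1}$ from $f^{n+1}=f^n\circ f$ and obtains (2) by induction along the tower of degree-$d$ extensions. Your version is slightly cleaner, and it also makes explicit that ``increasing'' must mean non-decreasing.
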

	It is obvious that if the iterate $f^n(x)=(f\circ f\circ \cdots\circ f) (x)$ is monogenic for all $n\in\mathbb{N}$, then $\langle I_{n} \rangle=1$ for all $n$, i.e., $\langle I_n \rangle$ is convergent. The following proposition establishes that the converse holds as well.
	\begin{corollary}\label{C12}
		Let $ f(x)\in\Z[x]$ be a monic polynomial such that $f^n(x)$ is irreducible for all $n\in\mathbb{N}.$ Then $f^n(x)$ is monogenic for all $n\in\mathbb{N}$ if and only if $\langle I_n \rangle$ is convergent.\end{corollary}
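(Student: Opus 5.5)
The forward direction is immediate. If $f^n(x)$ is monogenic for every $n\in\mathbb{N}$, then $I_n=1$ for all $n$. A constant sequence converges, so $\langle I_n\rangle$ is convergent.

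For the converse, I will use part (2) of Theorem \ref{Th3} with $m=1$, which gives $I_1^{d^{n-1}}\mid I_n$ for every $n$; here $d=\deg f\ge 2$. More generally, for any fixed $m$, part (2) gives $I_m^{d^{n-m}}\mid I_n$ for all $n\ge m$.

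Suppose $\langle I_n\rangle$ converges. Since it is a sequence of positive integers, it is bounded, say $I_n\le B$ for all $n$. Now suppose, for contradiction, that $I_m\ge 2$ for some $m$. Then for every $n\ge m$ the divisibility $I_m^{d^{n-m}}\mid I_n$ forces
$$I_n\ \ge\ I_m^{d^{n-m}}\ \ge\ 2^{d^{n-m}},$$
and the right-hand side tends to infinity as $n\to\infty$. This contradicts $I_n\le B$. Hence $I_m=1$ for every $m$, that is, $f^m(x)$ is monogenic for all $m$. This is exactly part (4) of Theorem \ref{Th3}, and the argument above just makes the implication explicit.

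An alternative route avoids part (2). A convergent integer sequence is eventually constant, say $I_n=c$ for all $n\ge N$. Then $I_N^{d}\mid I_{N+1}=c=I_N$, which forces $I_N=1$, and parts (1)/(3) then give $I_m\mid I_N=1$ for all $m\le N$. Either way, there is no real obstacle: the only point needing care is that a convergent sequence of positive integers is bounded (or eventually constant), after which the super-exponential growth coming from Theorem \ref{Th3}(2) does all the work.
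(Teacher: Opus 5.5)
Your proof is correct. Your primary argument goes through boundedness: a convergent sequence is bounded, and the growth $I_n\ge I_m^{d^{n-m}}\ge 2^{d^{n-m}}$ from Theorem \ref{Th3}(2) rules out any $I_m\ge 2$. In effect you obtain the corollary as a direct consequence of Theorem \ref{Th3}(4), which already does all the work.

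The paper instead follows what you call your alternative route. A convergent sequence of positive integers is eventually constant, $I_n=c$ for $n\ge N$, and $I_N^{d}\mid I_{N+1}$ gives $c^{d}\mid c$, hence $c=1$. The one difference is how the indices $m<N$ are handled. The paper invokes part (5), using that $I_N=1$ is trivially $d^{\text{th}}$ power free. You use the divisibility chain $I_m\mid I_{m+1}\mid\cdots\mid I_N=1$ from part (1), which is equally valid and slightly more direct.

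The trade-off between the two routes is small. The eventual-constancy argument needs the strong divisibility at only one index, plus the weak chain for earlier terms. Your boundedness argument needs the fully iterated form (2), but it shows the slightly stronger fact that boundedness alone forces monogenity. Both rely on $d\ge 2$, which is the paper's standing convention for compositions; the case $d=1$ is trivial anyway, since every iterate is then linear and has index $1$.
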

		In view of Theorem \ref{PTh1}, It is  easy to construct a $n$-tower $\Q\subseteq K_1\subseteq\cdots\subseteq K_n$ of number fields defined by $n$ iterates of a polynomial $f(x)\in\Z[x]$ with $[K_i:K_{i-1}]=\deg f$ for each $i=1,2\ldots,n-1.$ Using Theorem \ref{Th1} and Theorem \ref{Th2}, the following corollary gives a formula for a lower bound on the number of monogenic fields in this tower.
		\begin{corollary}\label{P6}
			Let $m,n\in\mathbb{N}$. Let $f(x)\in\Z[x]$ be a monic polynomial of degree $d\ge 2$ such that the $n^{\text{th}}$ iterate $f^n(x)$ is irreducible. Let $\theta_n$ be a root of $f^n(x)$ and let $\theta_i=f(\theta_{i+1})$. If $I_n$ is $m^{\text{th}}$ power free integer, then at least $\max\{\lfloor n-\log_dm \rfloor,0\}$ fields in the following tower are monogenic. $$\Q\subseteq \Q(\theta_1)\subseteq \cdots\subseteq\Q(\theta_{n-1})\subseteq\Q(\theta_n).$$
		\end{corollary}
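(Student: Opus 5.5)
The plan is to combine Theorem~\ref{Th3}(2) with the observation that $f^i$ is the polynomial whose root generates the $i$-th field of the tower. For each $i\le n$, the element $\theta_i=f^{n-i}(\theta_n)$ is a root of $f^i(x)$. Indeed, $f^i(\theta_i)=f^i(f^{n-i}(\theta_n))=f^n(\theta_n)=0$.

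The first step is to check that each $f^i$ is irreducible. Since $f^n=f^{n-i}\circ f^i$ is irreducible, $f^i$ is irreducible as well. Otherwise a factorization $f^i=ab$ would give $f^n(x)=\ldots$ hmm, not directly. The cleaner route is degrees. We have $[\Q(\theta_n):\Q]=d^n$, and $\Q(\theta_n)$ has degree at most $d^{n-i}$ over $\Q(\theta_i)$, because $\theta_n$ is a root of $f^{n-i}(x)-\theta_i$. Hence $[\Q(\theta_i):\Q]\ge d^i=\deg f^i$, so $f^i$ is the minimal polynomial of $\theta_i$. In particular $I_i=[\Z_{\Q(\theta_i)}:\Z[\theta_i]]$, and $\Q(\theta_i)$ is monogenic whenever $I_i=1$.

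The second step is to apply the divisibility. Applying Theorem~\ref{Th1} with $g=f^{n-i}$ and $f^i$ in place of $f$ gives $I_i^{d^{n-i}}\mid I_n$. This is exactly the statement of Theorem~\ref{Th3}(2), but that theorem assumed all iterates are irreducible, so we invoke Theorem~\ref{Th1} directly; only irreducibility of $f^i\circ f^{n-i}=f^n$ is needed. Suppose $I_i>1$, and let $p$ be a prime dividing $I_i$. Then $p^{d^{n-i}}\mid I_n$. Since $I_n$ is $m$-th power free, this forces $d^{n-i}<m$, that is, $i>n-\log_d m$.

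The conclusion then follows by counting. Every $i$ with $1\le i\le n-\log_d m$ gives $I_i=1$, so $\Q(\theta_i)$ is monogenic. The number of such integers $i$ is $\lfloor n-\log_d m\rfloor$ when this quantity is positive, and $0$ otherwise. The base field $\Q$ is also monogenic, which only helps. This gives at least $\max\{\lfloor n-\log_dm\rfloor,0\}$ monogenic fields.

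The only delicate point is the degree argument justifying that $\theta_i$ has minimal polynomial $f^i$ with $\deg f^i=d^i$; everything else is immediate from Theorem~\ref{Th1}.
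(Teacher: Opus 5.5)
Your proposal is correct and follows the paper's argument. The key step is the same: $I_i^{d^{n-i}}\mid I_n$ together with $m$-th power freeness forces $I_i=1$ whenever $d^{n-i}\ge m$, i.e.\ $i\le n-\log_d m$, and the count follows. You invoke Theorem~\ref{Th1} directly with $g=f^{n-i}$, which needs only irreducibility of $f^n$, and you check explicitly that $f^i$ is the minimal polynomial of $\theta_i$; this tidies up points the paper passes over by citing Theorem~\ref{Th3}(2), whose stated hypothesis is that all iterates are irreducible.
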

		\begin{remark} 
			If $m=\deg f$ or $m=2$ in the above corollary, then $ \lfloor n- \log_{\deg f} m \rfloor= n-1$.
				\end{remark}
				Theorem \ref{Th1} shows that if $f$ is non-monogenic, then $f\circ g$ is non-monogenic for every choice of $g$. Moreover, one can easily observe that if either $f$ or $g$ is monogenic, then their composition $f\circ g$ need not be monogenic. For example, let $f(x)=x^3-3$ and $g(x)=x^2-7$. It is clear that $I_f=I_g=1$ but $I_{f\circ g}=4$. So $f\circ g$ is non-monogenic. In the following theorem, we provide a necessary condition involving $I_g$ for the  monogenity of  $f\circ g$.
				\begin{theorem}\label{Th2}
					Let $ f(x),g(x)\in\Z[x]$ be two monic polynomials such that $g(x)$ and $(f\circ g)(x)$ are irreducible. If $(f\circ g)(x)$ is monogenic, then $\gcd\large(I^2_g,f(0)\large)$ is square-free.
				\end{theorem}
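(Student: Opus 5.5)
The plan is to prove the contrapositive using Dedekind's criterion \cite{DC}. Suppose $\gcd(I_g^2,f(0))$ is not square-free. Then there is a prime $p$ with $p^2\mid I_g^2$ and $p^2\mid f(0)$, that is, $p\mid I_g$ and $p^2\mid f(0)$. I will show that $p\mid I_{f\circ g}$, so $f\circ g$ is not monogenic.

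\emph{Step 1 (the $p$-defect of $g$).} Since $p\mid I_g$ and $g$ is irreducible, Dedekind's criterion applies to $g$. Write $\bar g=\prod_i \bar\phi_i^{e_i}$ in $\mathbb{F}_p[x]$ and fix monic lifts $\phi_i\in\Z[x]$. Put $G=\prod_i\phi_i^{e_i}$ and $g=G+pT$ with $T\in\Z[x]$. The criterion gives an index $j$ with $e_j\ge 2$ and $\bar\phi_j\mid \bar T$.

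\emph{Step 2 (factoring $f\circ g$ mod $p$).} Write $f(y)=yH(y)+f(0)$ with $H\in\Z[y]$, so that $f(g(x))=g(x)H(g(x))+f(0)$. Since $p\mid f(0)$, we get $\overline{f\circ g}=\bar g\,\overline{H(g)}$. Factor $\overline{H(g)}$ into irreducibles. Lift each irreducible factor by reusing $\phi_i$ whenever it equals some $\bar\phi_i$, and by fresh monic lifts otherwise; this keeps the lift of the full factorization consistent. This yields $\tilde H\in\Z[x]$ with $H(g)=\tilde H+pS$, and $G\tilde H$ is exactly the canonical Dedekind lift of the factorization of $\overline{f\circ g}$. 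The factor $\bar\phi_j$ then occurs in $\overline{f\circ g}$ with exponent at least $e_j\ge 2$.

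\emph{Step 3 (the Dedekind remainder).} Compute
\[ f(g)-G\tilde H = (G+pT)(\tilde H+pS)-G\tilde H+f(0)=pGS+pT\tilde H+p^2TS+f(0). \]
Dividing by $p$ and reducing mod $p$ gives
\[ \overline{\tfrac{f(g)-G\tilde H}{p}}=\bar G\bar S+\bar T\,\overline{\tilde H}+\overline{f(0)/p}. \]
Here $\bar\phi_j\mid\bar G$ and $\bar\phi_j\mid\bar T$. Moreover $\overline{f(0)/p}=0$ because $p^2\mid f(0)$, which is exactly where that hypothesis is used. Hence $\bar\phi_j$ divides this remainder, and it is a repeated factor of $\overline{f\circ g}$. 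By Dedekind's criterion, $p\mid I_{f\circ g}$, a contradiction.

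The main obstacle is the bookkeeping in Step 2. The lift used in Dedekind's criterion must be the product of lifts of the \emph{distinct} irreducible factors with their full multiplicities, and the same $\phi_j$ must be used in both $G$ and $\tilde H$. This is handled by the consistent choice of lifts above; alternatively, one can use the fact that the outcome of the criterion does not depend on the choice of lifts.
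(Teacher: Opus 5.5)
Your proof is correct and is essentially the paper's argument. The paper uses the same splitting $f(g(x))=g(x)b(x)+f(0)$ (Lemma~\ref{PL1}) and then applies Uchida's criterion to get $f\circ g\in\langle p,h\rangle^2$ for the same bad factor $h$ of $g$ mod $p$. That criterion is just the ideal-theoretic form of the Dedekind remainder computation you carry out explicitly with lifts, which also shows clearly that $p^2\mid f(0)$ is used exactly to kill the term $\overline{f(0)/p}$.
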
 
				\begin{example}
				Let $f(x)=x^4+x^3+x^2+x+25$ and $g(x)=x^3-25$, then $I_f=1$ and $I_g=5.$ Note that $f(x)$ is monogenic. Here $\gcd (I_g^2,f(0))=25$, not a square-free integer. Therefore from the above theorem $(f\circ g)(x)$ is non-monogenic.
				\end{example} 
	\section{Preliminary results}
	
 Let $\Q\subseteq K\subseteq L$ be a tower of number fields. Let $\mathfrak{P}$ be a prime ideal of $\Z_L$ and let
	$\mathfrak{p}$ be the prime ideal of $\Z_K$ which lies below $\mathfrak{P}$.
	Then $N_{L/K}(\mathfrak{P}) = \mathfrak{p}^s,$
	with ramification degree $s$, and further 
	for every fractional ideal $I=\displaystyle\prod_{i=1}^{t}\mathfrak{P}^{e_i}_i,$ by multiplicativity, we have
	$N_{L/K}(I) =\displaystyle\prod_{i=1}^t N_{L/K}(\mathfrak{P})^{e_i}.$ We now state some simple properties (see \cite[Chapter-3]{SKJ}, \cite[Chapter-4]{NAR}) of the norm defined above. 
	\begin{lemma}\label{L2.1}
		 If   $I$ and $J$ are non-zero fractional ideals in $L$, then the following statements are true.
		\begin{enumerate}
			\item If $I \subseteq \Z_L$, then $N_{L/K}(I) \subseteq \Z_K$.
			\item If $I \subseteq J$, then $N_{L/K}(I) \subseteq N_{L/K}(J)$.
			\item If $I$ is a principal fractional ideal generated by $a \in L$, then $N_{L/K}(I)$ is the principal fractional ideal of $\Z_K$ generated by $\mathcal{N}_{L/K}(a)$.
			\item The ideal $d_{K/\Q}$ is generated by $d_K$.
			\item If $I \subseteq \Z_K$, then $N_{K/\Q}(I)$ is the principal ideal generated by $N(I)$, where $N(I)$ is the absolute norm of $I$.
			\item   $N_{K/\Q}(I)=N(I)\Z.$
		\end{enumerate}
	\end{lemma}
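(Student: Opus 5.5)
The plan is to treat $N_{L/K}$ as the multiplicative extension, to the group of fractional ideals, of the map $\mathfrak{P}\mapsto \mathfrak{p}^{f(\mathfrak{P}|\mathfrak{p})}$. Here I take the exponent to be the residue degree $f(\mathfrak{P}|\mathfrak{p})=[\Z_L/\mathfrak{P}:\Z_K/\mathfrak{p}]$, which is the standard normalization. The text above calls $s$ the ramification degree; I will use the residue degree, since with the ramification index statements (3) and (5) would fail in general (for instance for an inert prime). With this definition, parts (1), (2), (5) and (6) are formal consequences of unique factorization, while (3) and (4) carry the content.

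For (1), an integral ideal $I=\prod\mathfrak{P}_i^{e_i}$ has all $e_i\ge 0$. Hence $N_{L/K}(I)=\prod \mathfrak{p}_i^{f_ie_i}$ has non-negative exponents and lies in $\Z_K$. For (2), write $I=J\cdot(J^{-1}I)$. The inclusion $I\subseteq J$ is equivalent to $J^{-1}I\subseteq\Z_L$. Multiplicativity then gives $N_{L/K}(I)=N_{L/K}(J)\,N_{L/K}(J^{-1}I)$, and the second factor is integral by (1), so $N_{L/K}(I)\subseteq N_{L/K}(J)$.

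The key step is (3): $N_{L/K}(a\Z_L)=\mathcal{N}_{L/K}(a)\Z_K$. It suffices to compare $\mathfrak{p}$-adic valuations for each prime $\mathfrak{p}$ of $\Z_K$, so I would localize at $\mathfrak{p}$. Let $S=\Z_K\setminus\mathfrak{p}$. Then $(\Z_K)_{\mathfrak{p}}$ is a DVR, and $S^{-1}\Z_L$ is a semilocal Dedekind domain, hence a PID, and it is free of rank $[L:K]$ over $(\Z_K)_{\mathfrak{p}}$.

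For a nonzero integral ideal $\mathfrak{a}$ of $S^{-1}\Z_L$, I would define the module index $\chi(\mathfrak{a})$ as the length of $S^{-1}\Z_L/\mathfrak{a}$ as a $(\Z_K)_{\mathfrak{p}}$-module. This is additive along chains, so $\chi(\mathfrak{a}\mathfrak{b})=\chi(\mathfrak{a})+\chi(\mathfrak{b})$. For a prime $\mathfrak{P}\mid\mathfrak{p}$, the quotient $S^{-1}\Z_L/\mathfrak{P}$ is a vector space of dimension $f(\mathfrak{P}|\mathfrak{p})$ over $\Z_K/\mathfrak{p}$, so $\chi(\mathfrak{P})=f(\mathfrak{P}|\mathfrak{p})$. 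Consequently $\chi(\mathfrak{a})=v_{\mathfrak{p}}(N_{L/K}(\mathfrak{a}))$.

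On the other hand, for $\mathfrak{a}=aS^{-1}\Z_L$, the quotient is the cokernel of multiplication by $a$ on a free module. By the elementary divisor theorem over the DVR, its length is $v_{\mathfrak{p}}(\det m_a)=v_{\mathfrak{p}}(\mathcal{N}_{L/K}(a))$. Comparing the two computations proves (3) for integral $a$. For general $a$, I would write $a=b/c$ with $b,c\in\Z_L$ and use multiplicativity on both sides. I expect this localization and determinant argument to be the main obstacle; everything else is bookkeeping.

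For (4), the ideal $d_{K/\Q}$ is generated by the discriminants of all $\Q$-bases of $K$ contained in $\Z_K$. Any such basis is the image of an integral basis under an integer matrix $A$, so its discriminant is $(\det A)^2 d_K\in d_K\Z$; the integral basis itself contributes $d_K$. Hence $d_{K/\Q}=d_K\Z$.

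For (5) and (6), I take $K/\Q$, where every prime $\mathfrak{p}$ above $p$ has norm $N_{K/\Q}(\mathfrak{p})=p^{f}\Z$. This equals $N(\mathfrak{p})\Z$ because $|\Z_K/\mathfrak{p}|=p^f$. The absolute norm is multiplicative by the Chinese remainder theorem together with $|\mathfrak{p}^k/\mathfrak{p}^{k+1}|=N(\mathfrak{p})$. Multiplicativity of both sides then gives $N_{K/\Q}(I)=N(I)\Z$, first for integral $I$ and then, by extending multiplicatively, for fractional $I$.
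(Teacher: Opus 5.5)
Your proof is correct. The paper gives no argument for this lemma: it presents (1)--(6) as standard properties of the ideal norm and refers to textbook chapters.

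You instead give a self-contained proof. Parts (1), (2), (5) and (6) follow from unique factorization and multiplicativity, and (4) follows from the change-of-basis identity $d(\beta)=(\det A)^2 d_K$. The one substantive part, (3), comes from the classical localization argument: you compute the $(\Z_K)_{\mathfrak p}$-length of $S^{-1}\Z_L/aS^{-1}\Z_L$ in two ways. Prime by prime it equals $\sum_{\mathfrak P\mid\mathfrak p} v_{\mathfrak P}(a)\,f(\mathfrak P|\mathfrak p)$. By elementary divisors it equals $v_{\mathfrak p}(\det m_a)=v_{\mathfrak p}(\mathcal N_{L/K}(a))$.

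Two steps you compress deserve a sentence each:
\begin{enumerate}
\item Additivity $\chi(\mathfrak a\mathfrak b)=\chi(\mathfrak a)+\chi(\mathfrak b)$ uses $\mathfrak a/\mathfrak a\mathfrak b\cong S^{-1}\Z_L/\mathfrak b$, which holds because $S^{-1}\Z_L$ is a PID.
\item Passing from $\chi(S^{-1}\mathfrak a)$ to $v_{\mathfrak p}(N_{L/K}(\mathfrak a))$ uses that primes of $\Z_L$ not above $\mathfrak p$ become the unit ideal after localizing.
\end{enumerate}

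What your approach buys over the bare citation is that it fixes the definition the lemma actually needs. You are right that the exponent in $N_{L/K}(\mathfrak P)=\mathfrak p^{s}$ must be the residue degree, not the ``ramification degree'' the paper writes. With the paper's wording, (3) and (6) already fail for the inert prime $3\Z[i]$ of $\Q(i)$: its norm would be $3\Z$, whereas $\mathcal N_{\Q(i)/\Q}(3)=9$ and $|\Z[i]/3\Z[i]|=9$. The residue-degree version is also the one the paper relies on in Lemma~\ref{L2.5} and in the proof of Theorem~\ref{Th1}.
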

	\noindent The next two results are standard and can be found in \cite[Chapter-4]{NAR}. 
	\begin{theorem}\label{pth2}
		The different $D_{L/K} $ is generated as an ideal of $\Z_L$ by the set of all differents \[ \delta_{L/K}(\alpha)=
		\begin{cases}
			f'(\alpha), & ~ \text{ if } L=K(\alpha) \text{ with minimal polynomial } f(x)\in\Z_K[x],\\
			0, & ~ \text{ otherwise },
		\end{cases}	\] with $\alpha$ running over $\Z_L.$
	\end{theorem}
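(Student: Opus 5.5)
The plan is to prove two inclusions: every element $f'(\alpha)$ lies in $D_{L/K}$, and conversely $D_{L/K}$ is contained in the ideal $\mathfrak{A}$ generated by these elements. Both use the trace dual (complementary module) $\mathfrak{C}(M)=\{x\in L:\operatorname{Tr}_{L/K}(xM)\subseteq \Z_K\}$ and the identity $D_{L/K}=\mathfrak{C}(\Z_L)^{-1}$.

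\textbf{Inclusion $\mathfrak{A}\subseteq D_{L/K}$.} Let $\alpha\in\Z_L$ with $L=K(\alpha)$ and minimal polynomial $f(x)\in\Z_K[x]$ of degree $n$. Euler's lemma gives $\operatorname{Tr}_{L/K}(\alpha^i/f'(\alpha))=0$ for $0\le i\le n-2$ and $=1$ for $i=n-1$. From this one gets $\mathfrak{C}(\Z_K[\alpha])=f'(\alpha)^{-1}\Z_K[\alpha]$. Since $\Z_K[\alpha]\subseteq \Z_L$, taking duals reverses the inclusion:
$$\mathfrak{C}(\Z_L)\subseteq \mathfrak{C}(\Z_K[\alpha])=f'(\alpha)^{-1}\Z_K[\alpha]\subseteq f'(\alpha)^{-1}\Z_L.$$
Inverting gives $f'(\alpha)\Z_L\subseteq D_{L/K}$. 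Elements $\alpha\in\Z_L$ that do not generate $L$ contribute $0$, so $\mathfrak{A}\subseteq D_{L/K}$.

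\textbf{Inclusion $D_{L/K}\subseteq\mathfrak{A}$.} It suffices to show, for each prime $\mathfrak{P}$ of $\Z_L$ dividing $D_{L/K}$, that some admissible $\alpha$ satisfies $v_{\mathfrak{P}}(f'(\alpha))=v_{\mathfrak{P}}(D_{L/K})$. Then $\mathfrak{A}$ and $D_{L/K}$ have the same valuation at every $\mathfrak{P}$ (the other inequality is the first inclusion), so they are equal.

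1. Let $\mathfrak{p}=\mathfrak{P}\cap\Z_K$. Pass to completions, using $D_{L/K}=\prod_{\mathfrak{P}\mid\mathfrak{p}}D_{L_{\mathfrak{P}}/K_{\mathfrak{p}}}$ locally at $\mathfrak{p}$.
2. Show local monogenicity: $\mathcal{O}_{L_{\mathfrak{P}}}=\mathcal{O}_{K_{\mathfrak{p}}}[\beta]$. Take $\beta=\zeta+\pi$, where $\zeta$ is a Teichmüller lift generating the residue extension and $\pi$ is a uniformizer. If needed, adjust so that $g(\beta)$ is a uniformizer, where $g$ is the minimal polynomial of $\zeta$. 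Then the local Euler computation gives $D_{L_{\mathfrak{P}}/K_{\mathfrak{p}}}=g_{\mathfrak{P}}'(\beta)$, where $g_{\mathfrak{P}}$ is the local minimal polynomial of $\beta$.
3. By the Chinese remainder theorem, choose $\alpha\in\Z_L$ very close to $\beta$ at $\mathfrak{P}$, and with $\alpha$ at each other $\mathfrak{Q}\mid\mathfrak{p}$ congruent to an element whose local minimal polynomial has residue reduction coprime to that of $g_{\mathfrak{P}}$. For example, send $\alpha$ to $0$ at the other primes after arranging that $\beta$ is a $\mathfrak{P}$-unit, which is possible by replacing $\beta$ with $\beta+1$ when necessary.
4. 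Over $K_{\mathfrak{p}}$, the global minimal polynomial factors as $f=\prod_{\mathfrak{Q}\mid\mathfrak{p}}f_{\mathfrak{Q}}$, so
$$f'(\alpha)=f_{\mathfrak{P}}'(\alpha)\prod_{\mathfrak{Q}\neq\mathfrak{P}}f_{\mathfrak{Q}}(\alpha)$$
in $L_{\mathfrak{P}}$. By Krasner-type continuity of roots, $v_{\mathfrak{P}}(f_{\mathfrak{P}}'(\alpha))=v_{\mathfrak{P}}(g_{\mathfrak{P}}'(\beta))$. The coprimality of residue reductions makes each $f_{\mathfrak{Q}}(\alpha)$ a $\mathfrak{P}$-unit.
5. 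Finally, perturb $\alpha$ by a large power of $\mathfrak{p}$ times a primitive element so that $K(\alpha)=L$. Only finitely many intermediate fields must be avoided, so this perturbation exists.

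The main obstacle is step 4: controlling the cross factors $f_{\mathfrak{Q}}(\alpha)$ for $\mathfrak{Q}\neq\mathfrak{P}$ while keeping $\alpha$ primitive over $K$. I would first try making the residues of $\alpha$ at $\mathfrak{P}$ and at the other $\mathfrak{Q}$ have distinct minimal polynomials over the residue field of $\mathfrak{p}$. This forces the two local factors to be coprime modulo $\mathfrak{p}$, hence their resultant is a unit. If the residue field is too small for this, I would fall back on the standard device of working in a finite unramified extension, or on citing Narkiewicz \cite[Chapter-4]{NAR} for this step.
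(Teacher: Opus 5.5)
The paper gives no proof of this statement; it quotes it as a standard fact from Narkiewicz (Chapter 4). Moreover, the paper only ever uses the easy inclusion $f'(\alpha)\in D_{L/K}$, in Lemma~\ref{L2.5} and in the proof of Theorem~\ref{Th1}. Your first paragraph proves that inclusion correctly, via Euler's lemma and $\mathfrak{C}(\Z_K[\alpha])=f'(\alpha)^{-1}\Z_K[\alpha]$, so it already supplies everything the paper needs. For the reverse inclusion you follow the usual textbook route: localize and complete, use local monogenicity $\mathcal{O}_{L_{\mathfrak{P}}}=\mathcal{O}_{K_{\mathfrak{p}}}[\beta]$, then approximate $\beta$ by a global primitive element via CRT. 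This route is sound and gives a self-contained argument where the paper only cites, but it needs the following repairs.

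\textbf{Unramified primes.} Your reduction to primes $\mathfrak{P}\mid D_{L/K}$ is a genuine, if easily closed, logical gap. At an unramified $\mathfrak{P}$, the inclusion $\mathfrak{A}\subseteq D_{L/K}$ only gives $v_{\mathfrak{P}}(\mathfrak{A})\ge 0$. You must also show $\mathfrak{A}\not\subseteq\mathfrak{P}$, and this is not automatic. In a cubic field in which $2$ splits completely, every $f'(\alpha)$ lies in some prime above $2$, because two of the three residues of $\alpha$ in $\mathbb{F}_2$ coincide, even though $2\nmid d_L$. Your steps 1--5 handle this case verbatim, since the local different is then trivial. So simply run them for every prime $\mathfrak{P}$; only those dividing a fixed nonzero $f'(\alpha_0)$ matter.

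\textbf{Order of steps 4 and 5.} The factorization $f=\prod_{\mathfrak{Q}\mid\mathfrak{p}}f_{\mathfrak{Q}}$ in step 4 needs $K(\alpha)=L$. Once that holds, separability of $f$ forces each local characteristic polynomial to be the local minimal polynomial. So do step 5 first, and note that a perturbation lying in $\mathfrak{p}^M\Z_L$, with $M$ large, preserves the congruences imposed in step 3.

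\textbf{Your flagged ``main obstacle''.} It is already disposed of by your own choice in step 3. If $\alpha\in\mathfrak{Q}$ for every $\mathfrak{Q}\neq\mathfrak{P}$ above $\mathfrak{p}$, then all roots of $f_{\mathfrak{Q}}$ have positive valuation. Hence $f_{\mathfrak{Q}}\equiv x^{\deg f_{\mathfrak{Q}}}\pmod{\mathfrak{p}}$, and $f_{\mathfrak{Q}}(\alpha)\equiv\alpha^{\deg f_{\mathfrak{Q}}}$ is a $\mathfrak{P}$-unit whatever the size of the residue field. No fallback is needed.

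\textbf{Optional simplification.} You can also drop Krasner. With $\zeta$ a Teichm\"uller lift, $g(\zeta+\pi)=g'(\zeta)\pi+O(\pi^2)$ is automatically a uniformizer. Then $\alpha\equiv\beta\pmod{\mathfrak{P}^2}$ already gives $\mathcal{O}_{K_{\mathfrak{p}}}[\alpha]=\mathcal{O}_{L_{\mathfrak{P}}}$, and hence $v_{\mathfrak{P}}(f_{\mathfrak{P}}'(\alpha))=v_{\mathfrak{P}}(D_{L/K})$ directly.
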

	\begin{lemma}\label{L2.3}
		If $\Q\subseteq K\subseteq L$ is a tower of number fields, then $d_{L/\Q}=d_{K/\Q}^{[L:K]}N_{K/\Q}(d_{L/K}).$
	\end{lemma}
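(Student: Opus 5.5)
The plan is to deduce the formula from the transitivity of the different, $D_{L/\Q}=D_{L/K}\,D_{K/\Q}\Z_L$, together with the fact that the discriminant is the norm of the different, $d_{L/K}=N_{L/K}(D_{L/K})$, and the multiplicativity of ideal norms in towers. Set $n=[L:K]$ throughout.

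First I establish transitivity of the different. Recall that $D_{L/K}^{-1}$ is the dual of $\Z_L$ under the trace form $\mathrm{Tr}_{L/K}$. For $x\in L$, using $\mathrm{Tr}_{L/\Q}=\mathrm{Tr}_{K/\Q}\circ \mathrm{Tr}_{L/K}$, the following are equivalent:
\begin{itemize}
\item $\mathrm{Tr}_{L/\Q}(x\Z_L)\subseteq \Z$;
\item $\mathrm{Tr}_{L/K}(x\Z_L)\subseteq D_{K/\Q}^{-1}$, since $\mathrm{Tr}_{L/K}(x\Z_L)$ is a $\Z_K$-module;
\item $\mathrm{Tr}_{L/K}(xD_{K/\Q}\Z_L)\subseteq \Z_K$;
\item $x D_{K/\Q}\subseteq D_{L/K}^{-1}$.
\end{itemize}
Hence $D_{L/\Q}^{-1}=D_{L/K}^{-1}D_{K/\Q}^{-1}\Z_L$, and inverting gives the claimed transitivity. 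This is the one step requiring real care: one must check that each equivalence genuinely uses only the $\Z_K$-module structure. Alternatively, one can invoke Theorem \ref{pth2} locally, using a generator $\alpha$ of a local power basis and the chain rule for derivatives of minimal polynomials. I would use the dual-lattice argument since it is cleaner.

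Next I take norms $N_{L/\Q}=N_{K/\Q}\circ N_{L/K}$ of both sides, using the multiplicativity recorded after Lemma \ref{L2.1}. This gives
\[
N_{L/\Q}(D_{L/\Q})=N_{K/\Q}\big(N_{L/K}(D_{L/K})\big)\cdot N_{K/\Q}\big(N_{L/K}(D_{K/\Q}\Z_L)\big).
\]
For an ideal $\mathfrak a$ of $\Z_K$ one has $N_{L/K}(\mathfrak a\Z_L)=\mathfrak a^{n}$; this follows from $\sum e_if_i=n$ over the primes above each $\mathfrak p$. Hence the second factor is $N_{K/\Q}(D_{K/\Q})^{n}$.

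Finally I identify each norm of a different with the corresponding discriminant, $N_{K/\Q}(D_{K/\Q})=d_{K/\Q}$, $N_{L/\Q}(D_{L/\Q})=d_{L/\Q}$ and $N_{L/K}(D_{L/K})=d_{L/K}$, which is the standard relation in \cite[Chapter-4]{NAR}. Substituting yields $d_{L/\Q}=d_{K/\Q}^{[L:K]}N_{K/\Q}(d_{L/K})$.

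As a sanity check, or as an alternative route, one can localize at a prime $p$ so that $\Z_{L,(p)}$ is free over the semilocal PID $\Z_{K,(p)}$. Taking products of bases $w_iv_j$ and using block-determinant manipulation of the trace matrix again gives $\det=d(w)^{n}\,\mathcal N_{K/\Q}(\det \mathrm{Tr}_{L/K}(v_jv_l))$, which is the same formula prime by prime.
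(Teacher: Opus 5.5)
Your proof is correct. The paper gives no proof of this lemma: it is quoted as a standard fact with a pointer to Chapter~4 of Narkiewicz, so there is no argument of the paper's to compare against. Your derivation is the standard textbook one: transitivity $D_{L/\Q}=D_{L/K}\,D_{K/\Q}\Z_L$ via the trace-dual description of the codifferent, then taking norms and using $d=N(D)$ at each level. Each equivalence in your chain is valid for two reasons. First, $\mathrm{Tr}_{L/K}(x\Z_L)$ is a $\Z_K$-submodule of $K$ and $xD_{K/\Q}\Z_L$ is a $\Z_L$-submodule of $L$. Second, $D_{K/\Q}$ is invertible.

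Two bookkeeping remarks. The identity $N_{L/\Q}=N_{K/\Q}\circ N_{L/K}$ is transitivity of the ideal norm, which comes from $f(\mathfrak P|p)=f(\mathfrak P|\mathfrak p)\,f(\mathfrak p|p)$; it is not the multiplicativity recorded after Lemma~\ref{L2.1}. Also, your step $N_{L/K}(\mathfrak a\Z_L)=\mathfrak a^{n}$ uses the correct convention $N_{L/K}(\mathfrak P)=\mathfrak p^{f(\mathfrak P|\mathfrak p)}$ with the residue degree. The paper's preliminary paragraph says ``ramification degree'' there, which is a slip: under that literal convention your step would fail, and so would Lemma~\ref{L2.1}(3).

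It is also worth seeing where the depth of the argument lies. The relation $d_{L/K}=N_{L/K}(D_{L/K})$ when $\Z_L$ is not free over $\Z_K$ is itself proved by localizing. So the alternative you sketch at the end is the more self-contained proof, since it avoids the different entirely. There you localize at $p$, take a $\Z_{K,(p)}$-basis $v_j$ of $\Z_{L,(p)}$, and use the block identity $\det=d(w)^{n}\,\mathcal N_{K/\Q}(\det(\mathrm{Tr}_{L/K}(v_jv_l)))$. The different-based route is shorter given the cited theorems, and it yields the finer ideal identity $D_{L/\Q}=D_{L/K}D_{K/\Q}\Z_L$ as a by-product.
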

	The following lemma will play an important role in the proof of Theorem \ref{Th1}.
		\begin{lemma}\label{L2.5}
		Let $\Q\subseteq K\subseteq L$ be a tower of number fields such that $L$ is generated by $\theta\in\Z_L$ over $K.$ If $g(x)\in\Z_K[x]$ is the minimal polynomial of $\theta,$ then the absolute norm of the ideal  $D_{L/K}$  divides the absolute value of $\mathcal{N}_{L/\Q}(g'(\alpha)).$ 
	\end{lemma}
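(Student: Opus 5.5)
The plan is to show that the principal ideal generated by $g'(\theta)$ is contained in the different $D_{L/K}$, and then pass to absolute norms. (I read the $\alpha$ in the statement as $\theta$, since $\theta$ is the only element in play.)

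\textbf{Containment.} By hypothesis $\theta\in\Z_L$ satisfies $L=K(\theta)$, and its minimal polynomial over $K$ is $g(x)\in\Z_K[x]$. Theorem~\ref{pth2} therefore applies with $\alpha=\theta$, so the element $\delta_{L/K}(\theta)=g'(\theta)$ is one of the generators of $D_{L/K}$. In particular $g'(\theta)\in D_{L/K}$, which gives
$$g'(\theta)\Z_L\subseteq D_{L/K}.$$
Since $L/K$ is separable (characteristic zero), $g$ has distinct roots. Hence $g'(\theta)\neq 0$, and $g'(\theta)\Z_L$ is a nonzero integral ideal.

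\textbf{From containment to divisibility.} The ring $\Z_L$ is a Dedekind domain, and $D_{L/K}$ is an integral ideal of it, being generated by elements of $\Z_L$. For nonzero ideals in a Dedekind domain, to contain is to divide. So there is an integral ideal $J\subseteq \Z_L$ with
$$g'(\theta)\Z_L=D_{L/K}\,J.$$

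\textbf{Taking norms.} The absolute norm of ideals is multiplicative, so
$$N\big(g'(\theta)\Z_L\big)=N(D_{L/K})\,N(J),$$
where $N(J)$ is a positive integer because $J$ is integral. By Lemma~\ref{L2.1}(3) and (5)--(6), applied to the extension $L/\Q$, the absolute norm of the principal ideal $g'(\theta)\Z_L$ equals $|\mathcal{N}_{L/\Q}(g'(\theta))|$. Therefore $N(D_{L/K})$ divides $|\mathcal{N}_{L/\Q}(g'(\theta))|$, as claimed.

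There is no real obstacle in this argument. The only point needing care is checking that $\theta$ satisfies the hypotheses of Theorem~\ref{pth2}, namely that it is integral and generates $L$ over $K$, so that $g'(\theta)$ genuinely lies in $D_{L/K}$.
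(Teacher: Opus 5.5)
Your proof is correct and takes essentially the same approach as the paper. Both arguments use $g'(\theta)\in D_{L/K}$ from Theorem~\ref{pth2}, turn the containment $g'(\theta)\Z_L\subseteq D_{L/K}$ into a divisibility in a Dedekind domain, and finish by multiplicativity of norms. The only difference is that you factor directly in $\Z_L$ and take the absolute norm once, whereas the paper first pushes the containment down to $\Z_K$ via $N_{L/K}$, factors there, and then applies $N_{K/\Q}$; your version is slightly shorter but equivalent.
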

	\begin{proof} 
		Here $L=K(\theta)$ and $g(x)$ is the minimal polynomial of $\theta\in\Z_L$ over $K.$ From Theorem \ref{pth2}, we have 
		$D_{L/K}= \langle\{ \delta_{L/K}(\alpha) ~ ~ | ~~ \alpha\in\Z_L\}\rangle.$ This shows that
		$g'(\theta)\in D_{L/K}$ and therefore $ 
		g'(\theta)\mathbb{Z}_L\subseteq D_{L/K}.$ Using $(2)$ of Lemma \ref{L2.1}, we see that \begin{equation}\label{EQa}
			N_{L/K}(g'(\theta)\Z_L)\subseteq N_{L/K}(D_{L/K}).
		\end{equation} Using $(3)$ of Lemma \ref{L2.1}, we get $N_{L/K}(g'(\theta)\Z_L)=\mathcal{N}_{L/K}(g'(\theta))\Z_K$. By $\eqref{EQa}$ and in view of the fact that $\Z_K$ is a Dedekind domain,  we can write 
		\begin{equation}\label{EQb}
			\mathcal{N}_{L/K}(g'(\theta))\Z_K=N_{L/K}(D_{L/K})C ,\end{equation} for some ideal $C$ of $\Z_K.$ Apply norm $N_{K/\Q}$ on both side of \eqref{EQb}, we obtain 
		$$N_{K/\Q}(\mathcal{N}_{L/K}(g'(\theta))\Z_K))=N_{K/\Q}(N_{L/K}(D_{L/K}))N_{K/\Q}(C).$$ Hence by $(3)$ of Lemma \ref{L2.1} and multiplicative property of $\mathcal{N}$, we see that $N_{L/\Q}(D_{L/K})$ divides  
		$\mathcal{N}_{L/\mathbb{Q}}(g'(\theta))\Z.$ Write $\mathcal{N}_{L/\Q}(g'(\theta))\Z=N_{L/\mathbb{Q}}(D_{L/K})A,$ for some ideal $A\subseteq \Z.$ Then by $(6)$ of Lemma \ref{L2.1}, $\mathcal{N}_{L/\Q}(g'(\theta))\Z=N(D_{L/K}))\Z A,$ where $N$ denotes the absolute norm. Using multiplicative property of $N$,  we get$|\mathcal{N}_{L/\Q}(g'(\theta))|=N(D_{L/K})N(A).$ 
		This implies that $N(D_{L/K})$ divides $|\mathcal{N}_{L/\Q}(g'(\alpha))|.$
	\end{proof}
	The following result is well-known \cite{SK} and will be used in the proof section and last section. 
	\begin{lemma}\label{lemmaB}
		Let $f(x)$ and $g(x)$ be two non-zero monic polynomials belonging to $\Z[x]$ such that $(f \circ g)(x)$ is irreducible, then the discriminant formula of  $(f\circ g)(x)$ is given by $$D(f\circ g)=\pm D(f)^{\deg g}\mathcal{N}_{\Q(\alpha)/\Q}(g'(\alpha)),$$ where $\alpha$ is a root of $(f\circ g)(x).$ 
	\end{lemma}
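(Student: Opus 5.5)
The plan is to compute $D(f\circ g)$ directly from the product formula for the discriminant of a monic separable polynomial and then regroup the roots according to the fibres of $g$. Put $h=f\circ g$, $n=\deg f$ and $m=\deg g$, so that $h$ is monic of degree $N=nm$. Since $h$ is irreducible over $\Q$, it is separable. Let $\alpha_1,\dots,\alpha_N$ be its distinct roots. Then
\[
D(h)=(-1)^{N(N-1)/2}\prod_{i=1}^{N}h'(\alpha_i).
\]
By the chain rule, $h'(x)=f'(g(x))\,g'(x)$, so
\[
\prod_{i=1}^{N}h'(\alpha_i)=\prod_{i=1}^{N}f'(g(\alpha_i))\cdot\prod_{i=1}^{N}g'(\alpha_i).
\]

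The second factor is handled by irreducibility. The $\alpha_i$ are exactly the $\Q$-conjugates of $\alpha=\alpha_1$, and $g'$ has rational coefficients. Hence $\prod_i g'(\alpha_i)=\mathcal{N}_{\Q(\alpha)/\Q}(g'(\alpha))$.

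For the first factor, let $\beta_1,\dots,\beta_n$ be the roots of $f$ with multiplicity. Then
\[
h(x)=\prod_{j=1}^{n}\bigl(g(x)-\beta_j\bigr),
\]
and each $g(x)-\beta_j$ is monic of degree $m$. Because $h$ is separable, these $n$ factors have pairwise disjoint root sets, so the $\beta_j$ are distinct and $f$ is separable. Each $g(x)-\beta_j$ contributes exactly $m$ distinct roots $\alpha_i$, and at each of them $g(\alpha_i)=\beta_j$. Thus the multiset $\{g(\alpha_i)\}$ consists of each $\beta_j$ repeated exactly $m$ times. It follows that
\[
\prod_{i}f'(g(\alpha_i))=\Bigl(\prod_j f'(\beta_j)\Bigr)^{m}=\bigl(\pm D(f)\bigr)^{m},
\]
using again that $D(f)=\pm\prod_j f'(\beta_j)$.

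Combining the two factors gives $D(f\circ g)=\pm D(f)^{\deg g}\,\mathcal{N}_{\Q(\alpha)/\Q}(g'(\alpha))$, with all signs absorbed into the $\pm$. The only delicate point is the fibre-counting step, namely that every root of $f$ is hit exactly $m$ times. This is exactly where separability of $h$, which comes from irreducibility, is used to rule out collisions between the fibres. Note that $f$ itself need not be assumed irreducible.
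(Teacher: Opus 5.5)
Your proof is correct and complete. The paper gives no proof of this lemma; it only quotes it as well known from \cite{SK}, so there is no in-text argument to compare yours with. Your computation supplies the self-contained justification the paper omits. It uses the product formula $D(h)=\pm\prod_i h'(\alpha_i)$ for monic $h$, the chain rule $h'=f'(g)\,g'$, and the splitting $f\circ g=\prod_j (g-\beta_j)$, which shows that each root of $f$ occurs exactly $\deg g$ times among the $g(\alpha_i)$. It lands directly on the norm form used in the proof of Theorem~\ref{Th1}. It also gives Remark~\ref{remark} at once: for $g=x^n$ one has $\mathcal{N}_{\Q(\alpha)/\Q}(n\alpha^{n-1})=n^{n\deg f}\,\mathcal{N}_{\Q(\alpha)/\Q}(\alpha)^{n-1}$, and $\mathcal{N}_{\Q(\alpha)/\Q}(\alpha)=\pm f(0)$.

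One small correction to your closing comment: separability is not what makes the fibre count work. Count roots with multiplicity. Then the multiset of roots of $f\circ g$ is the multiset union of the roots of the $g-\beta_j$, and the same computation gives $D(f\circ g)=\pm D(f)^{\deg g}\prod_{(f\circ g)(\gamma)=0}g'(\gamma)=\pm D(f)^{\deg g}\operatorname{Res}(f\circ g,g')$ for arbitrary monic $f,g$. Irreducibility is needed only where you identify $\prod_i g'(\alpha_i)$ with $\mathcal{N}_{\Q(\alpha)/\Q}(g'(\alpha))$. That step requires the roots of $f\circ g$ to be exactly the conjugates of $\alpha$, each appearing once.
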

	\begin{remark}\label{remark}
		Note that if $g(x)=x^{n}$ in the above lemma, then the discriminant formula of $f(x^n)$ is  $$D(f(x^n))=\pm D(f)^{n}n^{n\deg f}f(0)^{n-1}.$$
	\end{remark}
	 To investigate the monogenity of $f\circ g$ in Theorem \ref{Th2}, we will follow the approach of K. Uchida, so we state the following two results which are proved in \cite{UC}.
	
\begin{lemma}\label{UL}	An  ideal $\mathfrak M$ of $\mathbb{Z}[x]$ containing a monic polynomial is maximal if and only if $\mathfrak M =  \langle p, g(x) \rangle $  for some prime number   $p$  and a monic polynomial  $g(x)$ belonging to $\mathbb Z [x]$ which is irreducible modulo $p$.
\end{lemma}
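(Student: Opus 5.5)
The plan is to prove the two implications separately. The backward direction is a direct computation of the quotient ring. The forward direction reduces to the principal ideal domain $\mathbb{F}_p[x]$, once we know that $\mathfrak M$ contains a rational prime.

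For ($\Leftarrow$), suppose $\mathfrak M=\langle p,g(x)\rangle$ with $g$ monic and $\bar g$ irreducible in $\mathbb{F}_p[x]$. Reducing modulo $p$ gives
$$\mathbb{Z}[x]/\mathfrak M\cong \mathbb{F}_p[x]/(\bar g(x)).$$
The right-hand side is a field, because $\mathbb{F}_p[x]$ is a PID and $\bar g$ is irreducible, so $\mathfrak M$ is maximal. It also contains the monic polynomial $g$.

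For ($\Rightarrow$), let $\mathfrak M$ be maximal and let $h\in\mathfrak M$ be monic of degree $m$. The key steps are as follows.

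\begin{enumerate}
\item Since $h$ is monic, division by $h$ shows that $\mathbb{Z}[x]/(h)$ is a free $\mathbb{Z}$-module with basis $1,x,\dots,x^{m-1}$. Hence the field $F=\mathbb{Z}[x]/\mathfrak M$, being a quotient of it, is a finitely generated $\mathbb{Z}$-module.
\item The ideal $\mathfrak M\cap\mathbb{Z}$ is prime. It cannot be $0$: otherwise $\mathbb{Z}\subseteq F$, and since $F$ is a field, $\mathbb{Q}\subseteq F$. But $\mathbb{Q}$ is not a finitely generated $\mathbb{Z}$-module, and a submodule of a finitely generated module over the Noetherian ring $\mathbb{Z}$ is finitely generated. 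Hence $\mathfrak M\cap\mathbb{Z}=p\mathbb{Z}$ for some prime $p$, so $p\in\mathfrak M$.
\item Pass to $\mathbb{F}_p[x]=\mathbb{Z}[x]/p\mathbb{Z}[x]$. The image $\overline{\mathfrak M}$ is a maximal ideal of this PID, so $\overline{\mathfrak M}=(\bar g)$ for some monic irreducible $\bar g$.
\item Lift $\bar g$ to a monic $g\in\mathbb{Z}[x]$ of the same degree. Then $g\in\mathfrak M$, because its image lies in $\overline{\mathfrak M}$ and $p\mathbb{Z}[x]\subseteq\mathfrak M$. Conversely, any $u\in\mathfrak M$ satisfies $\bar u=\bar g\bar q$ for some $q$, so $u-gq\in p\mathbb{Z}[x]$. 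Hence $\mathfrak M=\langle p,g(x)\rangle$.
\end{enumerate}

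The main obstacle is step 2, showing that $\mathfrak M$ meets $\mathbb{Z}$ nontrivially. This is exactly where the hypothesis that $\mathfrak M$ contains a monic polynomial is used; without it, ideals such as $\langle 2x-1\rangle$ are maximal-type obstructions. The finite-generation argument above handles it cleanly. The remaining steps are routine reductions modulo $p$.
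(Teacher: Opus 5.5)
Your proof is correct and complete. The paper does not prove this lemma; it quotes it, together with Theorem~\ref{UT}, from Uchida \cite{UC}, so your proposal supplies an argument the paper leaves to the citation. Your key step is the standard one. The monic element makes $\mathbb{Z}[x]/\mathfrak M$ a finitely generated $\mathbb{Z}$-module; equivalently, it is a field integral over the image of $\mathbb{Z}$, which forces that image to be a field. Hence $\mathfrak M\cap\mathbb{Z}=p\mathbb{Z}$. Everything after that is reduction to the PID $\mathbb{F}_p[x]$, and your Steps 3--4 handle it correctly, including the monic lift of $\bar g$ and the check that $\mathfrak M=\langle p,g(x)\rangle$.

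One correction to your closing commentary, which does not affect the proof: $\langle 2x-1\rangle$ is not maximal, since $\mathbb{Z}[x]/\langle 2x-1\rangle\cong\mathbb{Z}[1/2]$ is not a field. In fact every maximal ideal of $\mathbb{Z}[x]$ meets $\mathbb{Z}$ nontrivially. Suppose $\mathfrak M\cap\mathbb{Z}=0$. Then $\mathfrak M\mathbb{Q}[x]$ is a proper nonzero ideal $f\mathbb{Q}[x]$, and maximality together with Gauss's lemma gives $\mathfrak M=f\mathbb{Z}[x]$ with $f$ primitive and nonconstant. But then $\langle f,p\rangle$, for a prime $p$ not dividing the leading coefficient of $f$, is a proper ideal strictly containing $\mathfrak M$, a contradiction. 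So the monic hypothesis is a convenience for your Step~1, not something the statement needs. It costs nothing here, since in Theorem~\ref{UT} the only relevant maximal ideals are those containing the monic minimal polynomial.
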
	
	
	\begin{theorem}\label{UT}
		Let  $K=\mathbb Q(\theta)$  with  $\theta$ in $\Z_K$ having  minimal polynomial $f(x)$ over $\Q$, then $\Z_K=\Z[\theta]$ if and only if $f(x)$ does not belong to $\mathfrak M ^2$ for any maximal ideal $\mathfrak M$ of the polynomial ring $\mathbb Z[x]$.
	\end{theorem}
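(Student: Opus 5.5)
The plan is to reduce the statement to a local computation at each maximal ideal of $\Z[\theta]$. Since $f(x)$ is the minimal polynomial of $\theta$, we have $\Z[\theta]\cong \Z[x]/\langle f(x)\rangle$. The ring $\Z[\theta]$ is a Noetherian domain of Krull dimension one, being finite over $\Z$, and $\Z_K$ is its integral closure. Hence $\Z_K=\Z[\theta]$ if and only if $\Z[\theta]$ is integrally closed, if and only if $\Z[\theta]$ is a Dedekind domain. This in turn holds if and only if the localization $\Z[\theta]_{\mathfrak m}$ is a discrete valuation ring for every maximal ideal $\mathfrak m$ of $\Z[\theta]$.

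Next I would describe the maximal ideals. The maximal ideals of $\Z[x]/\langle f\rangle$ are exactly the images of the maximal ideals $\mathfrak M$ of $\Z[x]$ that contain $f$. Since $f$ is monic, Lemma \ref{UL} gives $\mathfrak M=\langle p,h(x)\rangle$ with $p$ prime and $h$ monic and irreducible modulo $p$. Maximal ideals $\mathfrak M$ not containing $f$ satisfy $f\notin\mathfrak M^2$ trivially, so they impose no condition. It therefore suffices to show, for each maximal $\mathfrak M\ni f$, that
\[
\Z[\theta]_{\mathfrak m}\cong \Z[x]_{\mathfrak M}/\langle f\rangle \text{ is a DVR} \iff f\notin \mathfrak M^2 .
\]

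For this, note that $A=\Z[x]_{\mathfrak M}$ is a regular local ring of dimension $2$, with maximal ideal generated by $p$ and $h$. The quotient $A/fA$ is a one-dimensional local domain, and it is a DVR exactly when it is regular, i.e.\ when its maximal ideal needs only one generator. The cotangent space of $A/fA$ is $\mathfrak M A/(\mathfrak M^2A+fA)$. This space is one-dimensional over $A/\mathfrak M A$ exactly when $f\notin \mathfrak M^2A$, because the cotangent space of $A$ has dimension $2$. The last step is to pass back from $A$ to $\Z[x]$. Since $\mathfrak M$ is maximal, $\mathfrak M^2$ is $\mathfrak M$-primary, so $\mathfrak M^2A\cap\Z[x]=\mathfrak M^2$. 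Hence $f\notin\mathfrak M^2A$ if and only if $f\notin\mathfrak M^2$. Combining the three steps gives the theorem.

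The main obstacle is this middle equivalence. One must justify that $A/fA$ is a domain of dimension one, which follows because $\langle f\rangle$ is prime and $\mathfrak M$ is a height-two ideal containing it. One must also justify that regularity of a one-dimensional Noetherian local domain is equivalent to being a DVR. I would handle both by citing the standard fact that, in a regular local ring $(A,\mathfrak n)$, the quotient $A/fA$ is regular if and only if $f\in\mathfrak n\setminus\mathfrak n^2$. If one prefers an elementary route, the direction $f\notin\mathfrak M^2\Rightarrow$ DVR can be done by hand. If $f\notin\mathfrak M^2$, then writing $f\equiv ap+bh$ modulo $\mathfrak M^2$ shows that one of $p$ or $h$ lies in the ideal generated by the other and $\mathfrak M^2$ in the quotient. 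Nakayama's lemma then gives that the maximal ideal of $A/fA$ is principal.
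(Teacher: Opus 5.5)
The paper does not prove this statement itself; it quotes it from Uchida \cite{UC}. Your proposal is a correct, self-contained proof, and it follows the standard route of that source: reduce to whether $\Z[x]_{\mathfrak M}/f\,\Z[x]_{\mathfrak M}$ is a DVR, compare its cotangent space with the two-dimensional $\mathfrak M/\mathfrak M^2$, and contract back to $\Z[x]$ using that $\mathfrak M^2$ is $\mathfrak M$-primary. One small simplification in your elementary route: since $f\in\mathfrak M$, you already have $f=ap+bh$ exactly, with $a$ or $b$ a unit in $\Z[x]_{\mathfrak M}$. The maximal ideal of the quotient is therefore principal directly, without passing through $\mathfrak M^2$ or Nakayama.
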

	We now prove the following result which is useful in the proof of Theorem \ref{Th2}.
	
	\begin{lemma}\label{PL1}
		Let $f(x), g(x)\in\Z[x]$ be two monic polynomials such that  $f\circ g$ is irreducible. Let $h(x)$ be a monic irreducible polynomial modulo a prime $p$ such that $g(x)\in \langle p, h(x) \rangle^2$. Then $$f(g(x))\in \langle p,h(x)\rangle^2 ~ ~ ~ \iff ~ ~ ~ p^2 \text{ divide } f(0).$$
	\end{lemma}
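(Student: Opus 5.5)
Write $\mathfrak M=\langle p,h(x)\rangle$. By Lemma \ref{UL}, $\mathfrak M$ is maximal, since $h$ is monic and irreducible modulo $p$. The plan is to expand $f(g(x))$ around $g(x)$ and use that $g(x)\in\mathfrak M^2$. Write $f(y)=\sum_{i=0}^{n} a_i y^i$ with $a_n=1$. Then
$$f(g(x))=f(0)+\sum_{i\ge 1} a_i\, g(x)^i .$$
Each term with $i\ge1$ lies in $\mathfrak M^2$, because $g(x)\in\mathfrak M^2$ and $\mathfrak M^2$ is an ideal. Hence
$$f(g(x))\in\mathfrak M^2 \iff f(0)\in\mathfrak M^2 .$$
So the whole statement reduces to showing that, for an integer $c$, we have $c\in\mathfrak M^2$ if and only if $p^2\mid c$.

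One direction is immediate: if $p^2\mid c$, then $c\in p^2\Z[x]\subseteq\mathfrak M^2$.

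For the converse, suppose $c\in\mathfrak M^2=\langle p^2,\,p\,h(x),\,h(x)^2\rangle$. First reduce modulo $p$. In $\mathbb F_p[x]$ the ideal $\mathfrak M^2$ maps into $(\bar h^2)$. Since $\deg\bar h\ge1$ (as $h$ is monic and irreducible mod $p$), the constant $\bar c$ lies in $(\bar h^2)$ only if $\bar c=0$, so $p\mid c$. Write $c=pc'$. Then
$$pc'=p^2u+p\,h\,v+h^2w$$
for some $u,v,w\in\Z[x]$. Reducing modulo $p$ gives $\bar h^2\bar w=0$, so $p\mid w$; write $w=pw'$. Dividing by $p$ (legitimate since $\Z[x]$ is a domain) yields
$$c'=pu+hv+h^2w'\in\langle p,h\rangle .$$
Reducing modulo $p$ again gives $\bar c'\in(\bar h)$, and the same degree argument forces $p\mid c'$. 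Hence $p^2\mid c$.

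Applying this with $c=f(0)$ gives the claimed equivalence. The only delicate step is the converse, namely the membership $c\in\mathfrak M^2$ for a constant $c$. It is handled by the two successive reductions modulo $p$ together with the fact that a nonzero constant is never divisible by the positive-degree polynomial $\bar h$ in $\mathbb F_p[x]$. The irreducibility of $f\circ g$ is not needed for this lemma itself; it matters only when the lemma is combined with Theorem \ref{UT}.
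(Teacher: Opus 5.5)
Your proof is correct and follows the paper's route: write $f(g(x))=g(x)b(x)+f(0)$, so that membership of $f(g(x))$ in $\langle p,h(x)\rangle^2$ reduces to whether the constant $f(0)$ lies in that ideal. The paper only asserts that $f(0)=p^2u+phv+h^2w$ forces $p^2\mid f(0)$, and your two successive reductions modulo $p$ supply the justification it omits.
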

	\begin{proof}
		Let $f(x)=x^n+a_{n-1}x^{n-1}+\cdots+a_1x+a_0.$ Then 
			$f(g(x))=g(x)b(x)+a_0,$ where $b(x)=g(x)^{n-1}+a_{n-1}g(x)^{n-2}+\cdots+a_2g(x)+a_1.$ As $h(x)$ is a monic irreducible polynomial modulo $p$ and $g(x)\in \langle p,h(x)\rangle^2.$ Therefore $g(x)b(x)\in \langle p,h(x)\rangle^2.$ This shows that $f(g(x))\in \langle p,h(x)\rangle^2$ if and only if  $a_0\in \langle p,h(x)\rangle^2.$
		Equivalently, 
		$f(g(x))\in \langle p,h(x)\rangle^2$ if and only if $a_0=p^2u(x)+ph(x)v(x)+h^2(x)w(x),$ for some $u(x),v(x), w(x)\in\Z[x].$ This happens only when $p^2$ divides $ f(0)$. \end{proof}
	
	Let $F$ be a field and let $f(x),g(x) \in F[x]$. The following theorem, due to Capelli \cite[Page 288]{NG}, gives a necessary and sufficient condition for the composition $f \circ g$ to be irreducible over $F$.
	\begin{theorem}\label{PTh1}
		Let $F$ be a field. Let $f(x)$ and $g(x)$ be two monic polynomials belonging to $F[x]$ and $\beta$ be a root of $f(x)$ in the algebraic closure of $F$, then $(f\circ g)(x)$ is irreducible over $F$ if and only if 
		 $f(x)$ is irreducible over $F$ and $g(x)-\beta$ is irreducible over $F(\beta).$
	\end{theorem}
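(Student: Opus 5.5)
Since this is Capelli's classical criterion, I would prove it by a degree count in the tower of fields $F\subseteq F(\beta)\subseteq F(\alpha)$, where $\alpha$ is a suitably chosen root of $f\circ g$. Throughout, write $n=\deg f$ and $m=\deg g$. Because $f$ and $g$ are monic, $(f\circ g)(x)$ is monic of degree $nm$, and $g(x)-\beta$ is a monic polynomial of degree $m$ over $F(\beta)$.

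First, fix the given root $\beta$ of $f$ in an algebraic closure $\overline F$. Choose $\alpha\in\overline F$ to be any root of $g(x)-\beta$. Then $g(\alpha)=\beta$, so $(f\circ g)(\alpha)=f(\beta)=0$. Also $\beta=g(\alpha)\in F(\alpha)$, so we have the tower $F\subseteq F(\beta)\subseteq F(\alpha)$ and the tower law gives
\[
[F(\alpha):F]=[F(\alpha):F(\beta)]\,[F(\beta):F].
\]

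Second, bound each factor.
\begin{enumerate}
\item Since $\beta$ is a root of the monic polynomial $f$, we have $[F(\beta):F]\le n$, with equality if and only if $f$ is the minimal polynomial of $\beta$, i.e. if and only if $f$ is irreducible over $F$.
\item Since $\alpha$ is a root of the monic polynomial $g(x)-\beta\in F(\beta)[x]$, we have $[F(\alpha):F(\beta)]\le m$, with equality if and only if $g(x)-\beta$ is irreducible over $F(\beta)$.
\item Since $\alpha$ is a root of the monic polynomial $f\circ g$ of degree $nm$, we have $[F(\alpha):F]\le nm$, with equality if and only if $f\circ g$ is irreducible over $F$.
\end{enumerate}

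Combining these, suppose $f\circ g$ is irreducible over $F$. Then $nm=[F(\alpha):F]$ is the product of two factors bounded by $n$ and $m$, so both bounds are attained. Hence $f$ is irreducible over $F$ and $g(x)-\beta$ is irreducible over $F(\beta)$.

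Conversely, if both of those conditions hold, the product of the two factors equals $nm$. Then $[F(\alpha):F]=nm$, which forces $f\circ g$ to be the minimal polynomial of $\alpha$, so $f\circ g$ is irreducible over $F$.

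The only subtle point is the choice of $\alpha$. It is not an arbitrary root of $f\circ g$; it must be taken as a root of $g(x)-\beta$ for the fixed $\beta$, so that $\beta=g(\alpha)$ and $F(\beta)$ really sits inside $F(\alpha)$. Once $\alpha$ is chosen this way, the rest of the argument is routine bookkeeping.
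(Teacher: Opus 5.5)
Your proof is correct. It is the standard tower-law proof of Capelli's criterion. Each of the three degree bounds is justified properly, including its equality case: a monic polynomial of degree equal to $[F(\alpha):F(\beta)]$ (resp. $[F(\beta):F]$, $[F(\alpha):F]$) having the element as a root must be its minimal polynomial.

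The paper does not prove this statement at all; it simply quotes it from the literature as Capelli's theorem. Your degree count therefore supplies the missing justification. It also makes explicit the equality $[F(\alpha):F(g(\alpha))]=\deg g$ for any root $\alpha$ of an irreducible $f\circ g$, taking $\beta=g(\alpha)$. That equality is the consequence the paper actually invokes in its later proofs.
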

	\section{Proof of Theorem \ref{Th1}, Corollary \ref{C11}, Theorem \ref{Th3}, Corollaries \ref{C12}, \ref{P6} and Theorem \ref{Th2}.}
	\begin{proof}[Proof of Theorem \ref{Th1}] Set 
		$K=\mathbb{Q}(g(\theta))$,  
		$L=\mathbb{Q}(\theta),$ where
		$(f\circ g)(\theta)=0.$ From Theorem \ref{PTh1}, it is clear that $h(x)=g(x)-g(\theta)\in\Z_K[x]$ is irreducible over $\Z_K$ and $h(\theta)=0$. Using Lemmas~\ref{L2.3} and~\ref{lemmaB} we have 
		$$D(f\circ g)=\pm D(f)^{[L:K]}\, \mathcal{N}_{L/\mathbb{Q}}\!\big(g'(\theta)\big) ~ ~ \text{ and } ~ ~ ~
			d_{L/\Q}= d_{K/\Q}^{[L:K]}\, N_{K/\mathbb{Q}}(d_{L|K}).$$
		\noindent From $(4)$  and $(5)$ of Lemma \ref{L2.1}, we get \begin{equation}\label{eqr}
		d_{L}= \pm d_{K}^{[L:K]}\, N(N_{K/\mathbb{Q}}(d_{L/K})),\end{equation} where $N(N_{K/\mathbb{Q}}(d_{L/K}))$ is the absolute norm of $N_{K/\mathbb{Q}}(d_{L/K}).$ Now by well known identities $D(f)=I_f^2d_K$ and $D(f\circ g)=I_{f\circ g}^2d_L$, we see that
		$I_{f \circ g}^2\, d_L
			= \pm D(f)^{[L:K]}\, \mathcal{N}_{L/\mathbb{Q}}\!\big(g'(\theta)\big)$
		Using \eqref{eqr}, we obtain
		$$I_{f\circ g}^2\, d_K^{[L:K]}\, N(N_{K/\mathbb{Q}}(d_{L|K}))= (I_f^2\, d_K)^{[L:K]}\, \mathcal{N}_{L/\mathbb{Q}}\!\big(g'(\theta)\big).$$ By multiplicative property of norm and $(6)$ of lemma \ref{L2.1}, we get
		$\frac{I_{f\circ g}^2}{I_f^{2[L:K]}}=\frac{
				|\mathcal{N}_{L/\mathbb{Q}}(g'(\theta))|
			}{
				N(D_{L/K})
			}.$
		 Since $g'(\theta)\in D_{L/K}$, we see that $N_{L/\mathbb{Q}}(g'(\theta))\in\Z.$ Hence the conclusion follows from Lemma \ref{L2.5}.
	\end{proof}
	\begin{proof}[Proof of Corollary \ref{C11}]
		$(1)$ Let $\theta$ be a root of $F(x):=f(x^{p^t})$ and let $\theta_0=\theta^{p^t}$.  So by Theorem \ref{PTh1}, $[\Q(\theta):\Q(\theta_0)]=p^{t}$. Using Remark \ref{remark} with the well-known identity $D(F)=I_F^2d_{\Q(\theta)}$, we see that \begin{equation*}\label{eqP1}
			D(F(x))=(d_{\Q(\theta_0)})^{p^t}p^{tp^t\deg f}=I_F^2d_{\Q(\theta)}.\end{equation*}
		 As $d_{\Q(\theta_0)}^{p^t}$ divides $d_{\Q(\theta)},$ therefore $I_F=p^s$, for some non-negative integer $s.$
		\end{proof}

	\begin{proof}[Proof of Theorem \ref{Th3}]
		$(1)$ Here $f^{n+1}(x)=(f\circ f\circ \cdots\circ f)\circ f(x)=f^n(f(x)).$ Let $L=\Q(\theta)$ and $K=\Q(f^n(\theta)),$ where $\theta$ is a root of $f^{n+1}(x).$ Clearly from Theorem \ref{PTh1}, $f(x)-f(\theta)$ is irreducible over $K$ and so $[L:K]=\deg f=d.$ By Theorem \ref{Th1}, we conclude that $I_{f^n\circ f}$ is divisible by $I_f^d.$ \\
		$(2)$ From the above case, we see that $I^d_n$ divides $I_{n+1}$ for all $n\in \mathbb{N}.$ Let $m\le n.$ Then the degree of $f^m(x)$ and $f^n(x)$ is $d^m$ and $d^n$ respectively. Set $n=m+t.$ Let $K_{m+i}=\Q(f^{t-i}(\theta))$ for $0\le i\le t-1$ and $L=\Q(\theta)$,  where $\theta$ is a root of $f^n(x)$.  Clearly we have the following tower of number fields. $$K_m\subseteq K_{m+1}\subseteq\cdots\subseteq K_{n-1}\subseteq L.$$ From Theorem \ref{PTh1}, $[K_{m+i+1}:K_{m+i}]=\deg f=d$. A simple induction shows that $I_m^{d^t}$ divides $I_n.$\\
		$(3)$ Clearly from the above case, the sequence $\langle I_n\rangle$ is an increasing sequence.\\
		$(4)$ First suppose that $f^n(x)$ is monogenic for all $n\in\mathbb{N}$.  Then $I_n=1$ for all $n\in \mathbb{N}.$ So $I_n$ is bounded.  Conversely, let there exist a positive real number $M$ such that $I_n\le M$  for all $n\in\mathbb{N}.$ Suppose for some $m\in \mathbb{N},$ the $m$-fold composition $f^m(x)$ of $f(x)$ is non-monogenic, then $I_m>1$ and $\deg f^m(x)=d^m.$ Since $M$ is fix, we can easily choose a natural number $t$ such that $I_m^t>M.$ For a suitable natural number $u>t$ we obtain $I_m^{d^u}>I_m^t.$ By Case (2), we get $I_m^{d^u}\mid I_{m+u}$ and so $I_{m+u}>M$, a contradiction. Hence for all $n,$ the $n^{\text{th}}$ iterate $f^n(x)$ is monogenic.\\
		$(5)$ Here $I_n$ is $d^{\text{th}}$ power free integer. Suppose for some $m<n,$ $I_m>1.$ Then using Case $(2)$, $I_m^{d^{n-m}}\mid I_n$, which is not possible. So $I_m=1$ for all $m<n.$
	\end{proof}
	\begin{proof}[Proof of Corollary \ref{C12}]
		First suppose that $f^n(x)$ is monogenic for all $n\in\mathbb{N}.$ Then $I_n=1$ for all $n.$ Therefore the sequence $\langle I_n \rangle$ is convergent. Conversely, suppose $\langle I_n \rangle$ is convergent. Since $I_n$ is a sequence of natural number so it must be eventually constant, i.e., there exist some natural numbers $c$ and $N$ such that $I_n=c ~ ~  \text{ for all } n\ge N.$ Now using $(2)$ of Theorem \ref{Th3}, $I_{N}^{\deg f}$ divides $I_{N+1}.$ That is, $c^{\deg f}$ divides $c.$ This happens only when $c=1.$ Thus $I_n=1$ for all $n\ge N.$ Next, using $(5)$ of Theorem \ref{Th3}, we see that $I_n=1$ for all $n<N.$ Hence for all $n\in\mathbb{N},$ the $n^{\text{th}}$ iterate $f^n(x)$ is monogenic.
	\end{proof}
	\begin{proof}[Proof of Corollary \ref{P6}]
		Let $k\le n$. Clearly from $(2)$ of Theorem \ref{Th3} we have  $I_k^{d^{n-k}}\mid I_n.$ For $I_k=1,$ we let $d^{n-k} \ge m$. Then we have $d^{n}\ge md^k>1.$ This implies that $n\log d\ge \log m +k\log d.$ So we get $n\log d- \log m \ge k\log d,$ i.e., $n- \log_d m \ge k.$ This means that $k= \lfloor n- \log_d m \rfloor,$ where $\lfloor a \rfloor$ denotes the greatest integer less than or equal to $a.$
	\end{proof}
	\begin{proof}[Proof of Theorem \ref{Th2}]
		 Suppose that $\gcd(I_g^2,f(0))$ is not square-free. Let $p$ be a prime dividing $\gcd(I_g^2,f(0))$, then by Lemma \ref{UL} and Theorem \ref{UT}, $g(x)\in \langle p, h(x)\rangle^2$ for some monic irreducible factor $h(x)$ of $g(x)$ modulo $p.$ From Lemma \ref{PL1}, we known that $f(g(x))\in \langle p, h(x)\rangle^2$ if and only if  $p^2\mid f(0).$ As $p^2\mid f(0)$, so $h(x)$ is also an irreducible factor of $f(g(x))$ modulo $p$. Thus by Lemma \ref{UL} and Theorem \ref{UT}, we conclude that $p\mid I_{f\circ g}.$
	\end{proof}
	\section{Infinite examples}
	In this section, we present some direct consequences of our main results that are already known. The first case concerns the family of power compositional polynomials. Here, our general theorems yield a slightly weaker version of \cite[Theorem 1.1]{SSR}.
	\begin{proposition}
		Let $k\ge 2$ be a positive integer. Let $f(x)\in\Z[x]$ be a monic polynomial such that $f(x^k)$ is irreducible. If $f(x^k)$ is monogenic, then \begin{enumerate}
			\item $f(x)$ is monogenic,
			\item $p$ does not divide the index of $f(x^{p})$ for any prime $p\mid k$, and 
			\item $f(0)$ is square-free.
		\end{enumerate}
	\end{proposition}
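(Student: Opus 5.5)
The plan is to get all three parts from Theorem \ref{Th1} and from the Uchida-type criterion (Lemma \ref{UL}, Theorem \ref{UT}, Lemma \ref{PL1}), taking $g(x)=x^k$ or a suitable power map each time. Throughout, $f(x^k)$ is irreducible and monogenic, so $I_{f(x^k)}=1$.

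For (1), apply Theorem \ref{Th1} with $g(x)=x^k$. This is allowed because $g$ is monic of degree $k\ge 2$ and $f\circ g=f(x^k)$ is irreducible. It gives $I_f^{k}\mid I_{f(x^k)}=1$, so $I_f=1$ and $f(x)$ is monogenic.

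For (2), fix a prime $p\mid k$ and put $F(x)=f(x^p)$, so that $f(x^k)=F(x^{k/p})$.
\begin{itemize}
\item If $k=p$, then $F=f(x^k)$ and there is nothing to prove.
\item Otherwise $k/p\ge 2$. Since $F(x^{k/p})$ is irreducible, Theorem \ref{PTh1} (Capelli) shows that $F$ is irreducible. Theorem \ref{Th1}, applied to the pair $F$ and $x^{k/p}$, then gives $I_F^{k/p}\mid I_{f(x^k)}=1$.
\end{itemize}
In both cases $I_{f(x^p)}=1$, so in particular $p\nmid I_{f(x^p)}$.

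For (3), Theorem \ref{Th2} cannot be used directly, because $x^k$ is not irreducible; I will instead rerun its mechanism by hand. Suppose some prime $p$ satisfies $p^2\mid f(0)$, and take $h(x)=x$, which is monic and irreducible modulo $p$. Since $k\ge 2$, we have $x^k\in\langle p,x\rangle^2$. Lemma \ref{PL1} with $g(x)=x^k$ then gives $f(x^k)\in\langle p,x\rangle^2$; only the easy direction is needed here, namely that $f(0)\in\langle p^2\rangle\subseteq\langle p,x\rangle^2$. By Lemma \ref{UL} the ideal $\langle p,x\rangle$ is maximal, so Theorem \ref{UT} says $f(x^k)$ is not monogenic. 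This contradicts the hypothesis, hence $f(0)$ is square-free.

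I do not expect a serious obstacle. The only points needing care are the degenerate case $k=p$ in (2), where Theorem \ref{Th1} requires $\deg g>1$, and checking in (3) that Lemma \ref{PL1} does not actually use irreducibility of $g$. Its proof uses only $g\in\langle p,h\rangle^2$, so it applies to $g(x)=x^k$ without change.
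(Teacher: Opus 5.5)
Your proof is correct and follows the paper's own route. Theorem \ref{Th1}, applied with $g(x)=x^k$ and then to the pair $f(x^p)$, $x^{k/p}$ after Capelli, gives (1) and (2), and Lemma \ref{PL1} with $h(x)=x$ together with Theorem \ref{UT} gives (3); you are simply more explicit than the paper about the case $k=p$ and about needing only the easy direction of Lemma \ref{PL1}.
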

	\begin{proof}
		First suppose that $F(x)$ is monogenic. Using Theorem \ref{Th1}, we see that each $f(x^p)$  is monogenic and $f(x)$ is also monogenic. Using Lemma \ref{PL1} together with Theorem \ref{UT}, we can easily observe that $f(0)$ must be square-free.
	\end{proof}
	\begin{proposition}
		Let $k\ge 2$ be a positive integer. Let $f(x)\in\Z[x]$ be a monic polynomial such that $f(x^k)$ is irreducible. If  \begin{enumerate}
			\item $f(x)$ is monogenic,
			\item $p$ does not divide the index of $f(x^{p^{v_p(k)}})$ for any prime $p\mid k$, and 
			\item $|f(0)|=1$,
		\end{enumerate} then $f(x^k)$ is monogenic.
	\end{proposition}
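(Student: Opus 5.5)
Write $k=\prod_{i=1}^r p_i^{a_i}$ with $a_i=v_{p_i}(k)$ and $F(x)=f(x^k)$. Let $\theta$ be a root of $F$, and put $L=\Q(\theta)$ and $K=\Q(\theta^k)$. Since $F$ is irreducible, Theorem \ref{PTh1} gives that $f$ is irreducible and $[L:K]=k$. By Remark \ref{remark},
$$D(F)=\pm D(f)^k\,k^{k\deg f}f(0)^{k-1}.$$
With $|f(0)|=1$ and $D(f)=I_f^2d_K=d_K$ (by hypothesis (1)), this becomes $D(F)=\pm d_K^{k}k^{k\deg f}$. By Lemma \ref{L2.3}, $d_K^{k}$ divides $d_L$. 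Combining this with $D(F)=I_F^2d_L$, as in the proof of Corollary \ref{C11}, we get that $I_F^2$ divides $k^{k\deg f}$. So every prime divisor of $I_F$ divides $k$, and it suffices to show that $p_i\nmid I_F$ for each $i$.

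Fix $p=p_i$ and write $k=p^{a}m$ with $p\nmid m$. Set $G(x)=f(x^{p^a})$, so that $F(x)=G(x^m)$. Since $F$ is irreducible, Theorem \ref{PTh1} shows that $G$ is irreducible, and hypothesis (2) says $p\nmid I_G$. I will use the Uchida criterion, Theorem \ref{UT}, localized at $p$: $p\mid I_F$ if and only if $F\in\langle p,h(x)\rangle^2$ for some monic $h$ irreducible modulo $p$ (Lemma \ref{UL}). Suppose, for contradiction, that $F(x)=G(x^m)\in\langle p,h\rangle^2$.

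Differentiating gives $F'(x)=mx^{m-1}G'(x^m)$, and $F'$ lies in $\langle p,h\rangle$. Since $m$ is a unit mod $p$ and $p\nmid G(0)=f(0)$, $x\notin\langle p,h\rangle$. Hence $G'(x^m)\in\langle p,h\rangle$. Also $G(x^m)\in\langle p,h\rangle$. Now let $\bar h_1$ be the minimal polynomial over $\mathbb F_p$ of $\bar\alpha^m$, where $\bar\alpha$ is a root of $\bar h$, and take a monic lift $h_1$. Then $\bar G$ and $\bar G'$ both vanish at $\bar\alpha^m$, so $\bar h_1^2\mid\bar G$. The substitution $x\mapsto x^m$ is étale at $\bar\alpha$ because $p\nmid m$ and $\bar\alpha\ne0$. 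So locally $x^m-\beta$, with $\beta$ a lift of $\bar\alpha^m$, is a uniformizer-type parameter: it is a unit multiple of $h$ modulo the relevant ideals. The remaining task is to transfer the condition ``$p^2$-part'' from $\langle p,h\rangle^2$ to $\langle p,h_1\rangle^2$.

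Concretely, I will write $G=h_1^2A+pB$. Then $F=G(x^m)=h_1(x^m)^2A(x^m)+pB(x^m)$. Because $\bar h\,\|\,\bar h_1(x^m)$ (by the étaleness of $x\mapsto x^m$), the first term lies in $\langle p,h\rangle^2$, so $F\in\langle p,h\rangle^2$ is equivalent to $\bar h\mid\bar B(x^m)$. That in turn is equivalent to $\bar B(\bar\alpha^m)=0$, i.e.\ $\bar h_1\mid\bar B$, i.e.\ $G\in\langle p,h_1\rangle^2$. This is exactly the Dedekind-type criterion, and it would give $p\mid I_G$, contradicting hypothesis (2). Hence $p\nmid I_F$ for every $p\mid k$, so $I_F=1$ and $f(x^k)$ is monogenic.

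The main obstacle is this last step: checking carefully that $\bar h$ divides $\bar h_1(x^m)$ exactly once, i.e.\ that $\bar h_1(x^m)$ is squarefree modulo $p$. This holds because $x^m-c$ is separable over $\mathbb F_p$ for $c\ne0$, but the verification should be done rather than asserted. Equivalently, one can argue with the fields: $L/\Q(\theta^{m})$ is generated by a root of $x^m-\theta^m$, whose discriminant is prime to $p$ since $\theta$ is a $p$-unit. So the $p$-part of the index does not grow, which can be made precise through the discriminant relation of Lemma \ref{L2.3} and the identity $I_F^2d_L=D(F)$ exactly as in the proof of Theorem \ref{Th1}.
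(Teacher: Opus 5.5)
Your proof is correct, but for the prime-by-prime step it takes a different route from the paper.

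Your first step is the paper's closing step. Using $|f(0)|=1$, $D(f)=d_K$, Remark \ref{remark} and Lemma \ref{L2.3}, you get $I_F^2\mid k^{k\deg f}$. For a prime $p\mid k$, however, the paper does not use Uchida's criterion. It repeats the same discriminant bookkeeping for the tower $\Q(\theta^{m})\subseteq\Q(\theta)$ with $F=G(x^m)$. This gives $I_F^2d_L=\pm I_G^{2m}d_{\Q(\theta^m)}^{m}m^{k\deg f}$, and since $d_{\Q(\theta^m)}^m\mid d_L$, it follows that $I_F^2\mid I_G^{2m}m^{k\deg f}$. Hence $p\nmid I_F$, because $p\nmid m$ and $p\nmid I_G$. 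The paper first upgrades hypothesis (2) to $I_G=1$ via Corollary \ref{C11}, which is not actually needed.

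This is exactly the ``fields'' alternative in your last sentence. It is shorter and uses only tools already set up in the paper. Your local argument instead transfers a bad maximal ideal: from $F\in\langle p,h\rangle^2$ you produce $G\in\langle p,h_1\rangle^2$, with $\bar h_1$ the minimal polynomial of $\bar\alpha^m$. That is more explicit, since it names the witness for $p\mid I_G$. It does rely on the local form of Theorem \ref{UT} ($p\mid I_F$ iff $F\in\langle p,h\rangle^2$ for some $h$ irreducible mod $p$). The paper states that theorem only globally, though it uses it in this local form in the proof of Theorem \ref{Th2}.

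The step you flag as the main obstacle is not needed. To place $h_1(x^m)^2A(x^m)$ in $\langle p,h\rangle^2$ you only need $\bar h\mid\bar h_1(x^m)$, which holds because $\bar h_1(\bar\alpha^m)=0$. The equivalence $pB(x^m)\in\langle p,h\rangle^2\iff B(x^m)\in\langle p,h\rangle$ holds for every $B$: reduce $pB(x^m)=p^2u+phv+h^2w$ modulo $p$ to get $p\mid w$, then divide by $p$. The conditions $p\nmid m$ and $\bar\alpha\neq 0$ are already fully used when you deduce $\bar G'(\bar\alpha^m)=0$ from $F'\in\langle p,h\rangle$. So you can drop the squarefreeness of $\bar h_1(x^m)$ and the ``uniformizer'' remark.
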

	\begin{proof}
		 Write $k=p_1^{e_1}p_2^{e_2}\cdots p_t^{e_t}$ as a product of distinct  prime powers. Let $k_i=\frac{k}{p_i^{e_i}}$ for $1\le i\le t$. Set $F_i(x)=f(x^{p_i^{e_i}})$. Then $F(x):=f(x^k)=F_i(x^{k_i})$ for all $1\le i\le t.$ Clearly from Theorem \ref{PTh1}, each $F_i$ is irreducible. Suppose that $\theta$ is a root of $F(x)$, then $F_i(\theta^{k_i})=0$ and $f(\theta^k)=0.$ Let $\theta_i=\theta^{k_i}$, then $[\Q(\theta):\Q(\theta_i)]=k_i$ for all $i=1,2\ldots,t.$
		
		In view of Remark \ref{remark} and the fact that $D(F(x))=I_F^2d_{\Q(\theta)}$, we have \begin{align*}I_F^2d_{\Q(\theta)}&=\pm D(F_i)^{k_i}k_i^{k\deg f},\\
			&=\pm I^{2k_i}_{F_i}(d_{\Q(\theta_i)})^{k_i}k_i^{k\deg f}.	\end{align*} 
		Clearly Theorem \ref{Th1}, implies that $I_{F_i}^{2k_i}$ divides $I_F^2$. Also we know that  $d_{\Q(\theta_i)}^{k_i}$ divides $d_{\Q(\theta)}$.
		Using Corollary \ref{C11} and given condition (2), for each $i$ we have $I_{F_i}=1$. Also, for each $1\le i\le t,$ $k_i=\frac{k}{p_i^{e_i}}$ implies that $p_i$ does not divide $I_F.$ Thus any prime divisor of $k$ will not divide $I_F.$ Therefore the identity $I^2_Fd_{\Q(\theta)}=\pm I^{2k}_{f}(d_{\Q(\theta_k)})^{k}k^{k\deg f}$ implies that $I_F=1.$
	\end{proof}

	\noindent\textbf{Statements and Declarations}
	
	\noindent\textbf{Funds}
	The authors declare that no funds, grants, or other support were received during the preparation of this manuscript.
	
	\noindent\textbf{Competing Interests} The authors have no relevant financial or non-financial interests to disclose.
	
	\noindent\textbf{Author Contributions} All authors contributed equally to this work. All authors have read and approved the final manuscript.
	
	\noindent\textbf{Data Availability} No data were generated or analyzed during the current study.
\end{document}